\newtheorem{thm}{Theorem}
\newtheorem{prop}{Proposition}
\newtheorem{cor}{Corollary}
\newtheorem{lem}{Lemma}
\theoremstyle{remark}
\newtheorem{rem}{Remark}
\newtheorem{ex}{Example}
\theoremstyle{definition}
\newtheorem{defn}{Definition}
\newcommand{\Q}{\mathbb{Q}}
\newcommand{\R}{\mathbb{R}}
\newcommand{\Z}{\mathbb{Z}}
\DeclareMathOperator{\SL}{SL}
\DeclareMathOperator{\Sol}{Sol}
\DeclareMathOperator{\Nil}{Nil}
\newcommand\id{\operatorname{id}}
\title[On three-manifolds dominated by circle bundles]{On three-manifolds dominated by circle bundles}
\author{D.~Kotschick}
\address{Mathematisches Institut, {\smaller LMU} M\"unchen, Theresienstr.~39, 80333~M\"unchen, Germany}
\email{dieter@member.ams.org}
\author{C.~Neofytidis}
\address{Mathematisches Institut, {\smaller LMU} M\"unchen, Theresienstr.~39, 80333~M\"unchen, Germany}
\email{Christoforos.Neofytidis@mathematik.uni-muenchen.de}
\date{February 22, 2012; \copyright{\ D.~Kotschick and C.~Neofytidis 2012}}
\subjclass[2000]{57M05, 57M12, 57M50}
\thanks{We are grateful to J.~Bowden and to P.~Derbez for helpful discussions. This paper was completed
during a visit of the first author to the Institut Mittag-Leffler (Djursholm, Sweden).
The second author is supported by the {\it Deutscher Akademischer Austausch Dienst} (DAAD)}
\begin{document}

\begin{abstract}
We determine which three-manifolds are dominated by products. The result is that a closed, oriented, connected three-manifold is dominated by a product 
if and only if it is finitely covered either by a product or by a connected sum of copies of $S^2 \times S^1$. This characterization can also be formulated in 
terms of Thurston geometries, or in terms of purely algebraic properties of the fundamental group. We also determine which three-manifolds are dominated 
by non-trivial circle bundles, and which three-manifold groups are presentable by products.
\end{abstract}

\maketitle


\section{Introduction}\label{s:introduction}

The study of non-zero degree maps between closed, oriented manifolds has become very active over the last few decades~\cite{CT,Gromov,KL}. 
The existence of a non-zero degree map, $M \longrightarrow N$, defines a transitive relation on the set of homotopy types of closed, oriented manifolds. 
Whenever such a map exists we say that $M$ dominates $N$ and write $M \geq N$. In this case $M$ is at least as complicated as $N$.
For example, the induced maps in rational homology are surjective, thus, in particular, the Betti numbers of $N$ are bounded from above by those of 
$M$. Also, $M \geq N$ implies that the fundamental group of $M$ surjects onto a finite index subgroup of the fundamental group of $N$.

In dimension two, the domination relation coincides with the ordering given by the genus, but in higher dimensions it fails to be an ordering. 
We illustrate this by the following two examples in dimension three. The examples have obvious generalizations to higher dimensions.

\begin{ex}
The two three-manifolds $M=S^3$ and $N=\R P^3$ satisfy $M\geq N$ and $N\geq M$, but fail to be homotopy equivalent.
\end{ex}

\begin{ex}
Let $M$ be a hyperbolic homology three-sphere, and $N=S^1\times S^2$. Then $N$ has larger first Betti number than $M$, and so
$M\ngeq N$. We also have $N\ngeq M$ since the fundamental group of $N$ is infinite cyclic, and so cannot surject onto the fundamental 
group of a closed negatively curved manifold, for example by Preissmann's theorem. Thus $M$ and $N$ are not comparable under the 
domination relation.
\end{ex}

In this paper, we study domination by products for three-manifolds. This is motivated by the work of L\"oh and the first author in~\cite{KL,KL2}, 
where strong restrictions were found for certain manifolds with large universal coverings to be dominated by products. In fact, the results of 
those papers show that three-manifolds dominated by products cannot have hyperbolic or $\Sol^3$-geometry, and must often be prime.
However, in this paper we will not use those earlier results, but follow a more direct approach. This is possible since in dimension three
the only product manifolds are those with a circle factor, and this gives  much stronger constraints than the consideration of 
arbitrary products. The main result we prove here is the following:


\begin{thm}\label{t:topargument}
      A closed, oriented, connected three-manifold $N$ is dominated by a product $\Sigma\times S^1$ if and only if 
      \begin{enumerate}
         \item either $N$ is finitely covered by a product $F\times S^1$, for some aspherical surface $F$, or
         \item $N$ is finitely covered by a connected sum $\#_n(S^2\times S^1)$.
      \end{enumerate}
   \end{thm}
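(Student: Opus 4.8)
The plan is to prove the two implications separately; the ``if'' direction is short and the substance lies in the ``only if'' direction.

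\smallskip
\textbf{The ``if'' direction.} If $N$ satisfies~(1), a finite covering $F\times S^1\to N$ is already a nonzero-degree map from a product. If $N$ satisfies~(2), it is enough, by composing with the covering $\#_n(S^2\times S^1)\to N$, to show that $\#_n(S^2\times S^1)$ is itself dominated by a product, and I would do this by exhibiting an explicit degree-one map $\Sigma_n\times S^1\to\#_n(S^2\times S^1)$, where $\Sigma_n$ is the closed orientable surface of genus $n$. (Genus at least $n$ is forced already on $\pi_1$, since $\pi_1(\Sigma_g)\times\Z$ surjects onto the free group $F_n$ only for $g\ge n$.) Such a map can be built on a Heegaard/handle decomposition, or by obstruction theory: send the $2$-skeleton of $\Sigma_n\times S^1$ into that of $\#_n(S^2\times S^1)$ carrying the attaching sphere of the top $3$-cell to the attaching sphere of the top $3$-cell, and extend over the $3$-cell; one then checks the degree is $1$. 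This has to be done by hand, but it is elementary, not the heart of the matter.

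\smallskip
\textbf{The ``only if'' direction: reductions.} Let $f\colon\Sigma\times S^1\to N$ have nonzero degree. Both~(1) and~(2) pass from a finite covering of $N$ to $N$, and a nonzero-degree map to $N$ lifts to the covering $N'\to N$ corresponding to the finite-index subgroup $\operatorname{im}(f_*)\le\pi_1(N)$, with the lift again of nonzero degree; so I may assume $f_*\colon\pi_1(\Sigma)\times\Z\twoheadrightarrow\pi_1(N)$. If $\Sigma=S^2$ then $\pi_1(N)$ is cyclic: if finite, $N$ is covered by $S^3=\#_0(S^2\times S^1)$ by the Poincar\'e conjecture; if infinite, $N$ is $S^2\times S^1$ or $\RP^3\#\RP^3$, each finitely covered by $S^2\times S^1$; so~(2) holds. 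Hence assume $\Sigma=\Sigma_g$ with $g\ge1$, so $\Sigma\times S^1$ is aspherical, and set $t:=f_*(\text{generator of }\{1\}\times\Z)$, a \emph{central} element of $\pi_1(N)$.

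\smallskip
\textbf{The ``only if'' direction: the main dichotomy.} Suppose first $N$ is aspherical. Then $t$ has infinite order: precomposing $f$ with $\id_{\Sigma_g}\times(\text{degree }k)$ multiplies the degree by $k$ and replaces $t$ by $t^k$, so if $t$ had finite order we could take $t=1$, whereupon $f_*$ factors through $\pi_1(\Sigma_g)$ and hence (as $N$ is aspherical) $f$ factors up to homotopy through $\Sigma_g\times S^1\to\Sigma_g$, forcing $\deg f=0$. Thus $\langle t\rangle\cong\Z$ is an infinite cyclic normal subgroup, so by the Seifert fibred space theorem $N$ is Seifert fibred; its base orbifold $B$ cannot be spherical (else $\pi_1(N)$ would be virtually $\Z$, impossible for a closed aspherical $3$-manifold), so $\chi^{\mathrm{orb}}(B)\le0$. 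Since $f$ respects fibres on $\pi_1$, I would homotope it to a fibre-preserving map of base-degree $b\ne0$ and fibre-degree $c\ne0$; the Euler numbers then satisfy $e(\Sigma_g\times S^1)=\lambda\, e(N)$ with $\lambda=b/c\ne0$, so $e(N)=0$, and a Seifert manifold with $\chi^{\mathrm{orb}}(B)\le0$ and zero Euler number is finitely covered by $F\times S^1$ for an aspherical surface $F$: this is~(1). Suppose instead $N$ is not aspherical. If every prime summand has finite or infinite cyclic fundamental group, then $\pi_1(N)$ is virtually free, $N$ is finitely covered by some $\#_n(S^2\times S^1)$, and~(2) holds. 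Otherwise $N$ has an aspherical prime summand and a nonempty remainder; after the reduction $\pi_1(N)$ is still a nontrivial free product (a finite-index subgroup of one cannot lie in a single factor, which has infinite index, so by Kurosh it is again a nontrivial free product), hence has trivial centre and $t=1$. Collapsing the remainder gives a degree-one map to an aspherical prime summand $P$; as above $f_*$, hence $(\mathrm{collapse}\circ f)_*$, factors through $\pi_1(\Sigma_g)$, so $\mathrm{collapse}\circ f$ factors up to homotopy through $\Sigma_g$ and $\deg(\mathrm{collapse}\circ f)=0$ --- contradicting $\deg(\mathrm{collapse}\circ f)=\deg f\ne0$. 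So this case does not occur, which finishes the proof.

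\smallskip
\textbf{Main obstacle.} The crux is the aspherical case: producing the infinite cyclic central subgroup of $\pi_1(N)$ so as to invoke the Seifert fibred space theorem, and then showing the Euler number vanishes. The Euler-number step is the only point where the \emph{geometry} of the fibration, rather than just $\pi_1$, enters --- one must replace the given map by a fibre-preserving one and compute --- and it is exactly what makes the hypothesis that the circle bundle be \emph{trivial} count.
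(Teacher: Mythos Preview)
Your ``only if'' direction tracks the paper's argument closely: reduce to a $\pi_1$-surjective map, exploit the central image $t$ of the $S^1$-factor, use the collapse-to-aspherical-summand trick to rule out the mixed case (aspherical summand plus nonempty remainder), then invoke the Seifert fibre space theorem and show the Euler number vanishes. The paper organises this via the rationally essential/inessential dichotomy rather than aspherical/non-aspherical, but the content is the same. Your Euler-number step uses the relation $c\cdot e(\text{source})=b\cdot e(\text{target})$ for fibre-preserving maps; the paper instead factors $f$ through the pullback bundle $\bar f^{*}N$ and argues that $\deg(\bar f^{*}N\to N)=\deg\bar f=0$ when $e(N)\neq 0$ (since then $\pi_*$ kills $H_2$). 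These are equivalent repackagings, but note that you apply yours with $N$ merely Seifert over an orbifold base; it is cleaner to first pass---as the paper does---to a finite cover where $N$ is an honest circle bundle before running the computation.

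The genuine gap is the ``if'' direction for case~(2). You write that a nonzero-degree map $\Sigma_n\times S^1\to\#_n(S^2\times S^1)$ ``can be built \ldots\ by obstruction theory'' and is ``elementary, not the heart of the matter,'' but your sketch (match attaching spheres of the top cells) is not a proof: the $2$-skeleta differ substantially, the target is not aspherical, and there is no evident reason the attaching maps can be aligned. The paper treats this construction as a point of independent interest and supplies an explicit branched double cover: take the pillowcase quotient $P\colon T^2\to S^2$, form $P\times\id_{S^1}\colon T^3\to S^2\times S^1$, excise a $3$-ball $D^2\times I$ containing two branch points (its preimage is $(\text{annulus})\times I$, so the complement upstairs is $(T^2\setminus D^2)\times S^1$), double to obtain $\Sigma_2\times S^1\to\#_2(S^2\times S^1)$, and then take the fibre product with the unbranched cover $\#_n(S^2\times S^1)\to\#_2(S^2\times S^1)$. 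This concrete construction, not an obstruction-theoretic hand-wave, is what actually carries the ``if'' direction.
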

As usual, the empty connected sum corresponding to $n=0$ is $S^3$. 

The proof of Theorem~\ref{t:topargument} falls naturally into two parts. On the one hand, we have to prove that all three-manifolds not 
listed in the statement of the theorem cannot be dominated by products. 
On the other hand, we have to prove that the manifolds listed in the theorem are 
indeed dominated by products. This is obvious for manifolds finitely covered by products, but it is not obvious for the connected sums
occurring in the second statement. Here the proof proceeds by constructing certain maps of non-zero degrees as branched coverings.
This construction, which also has a high-dimensional generalization, is of independent interest. 

Previously, many non-trivial results have been proved about the domination relation in dimension three using a variety of tools different 
from the ones we use here, such as Thurston's geometries, Gromov's simplicial volume, and the Seifert volume. A survey of the state of 
the art at the beginning of the last decade is given in~\cite{Wa}. For more recent results, especially related to the issue of finiteness of 
sets of mapping degrees between three-manifolds, see for example~\cite{DSW} and the papers quoted there. Our proofs here are 
independent of this earlier work, and in fact clarify certain claims made there, cf.~Subsection~\ref{ss:W} below.

It is not immediately obvious to what extent Theorem~\ref{t:topargument} really depends on the assumption that the domains of 
our dominant maps are products, and one could try to replace these products by fibered three-manifolds. For this purpose surface 
bundles over the circle are not interesting, since every three-manifold is dominated by such a bundle by a result of Sakuma~\cite{Sa}.
However, considering non-trivial circle bundles over surfaces we obtain a result parallel to Theorem~\ref{t:topargument}:
 \begin{thm}\label{t:topbundle}
      A closed, oriented, connected three-manifold $N$ is dominated by a non-trivial circle bundle over a surface if and only if 
      \begin{enumerate}
         \item either $N$ is finitely covered by a non-trivial circle bundle over some aspherical surface, or
         \item $N$ is finitely covered by a connected sum $\#_n(S^2\times S^1)$.
      \end{enumerate}
   \end{thm}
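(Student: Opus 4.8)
The plan is to follow the two-part structure of the proof of Theorem~\ref{t:topargument}. For the ``if'' direction, case~(1) is immediate, since if $N$ is finitely covered by a non-trivial circle bundle $E$ over an aspherical surface then the covering projection $E\to N$ is already a non-zero degree map from such a bundle. By transitivity of domination, the remaining case~(2) reduces to showing that $\#_n(S^2\times S^1)$ is itself dominated by a non-trivial circle bundle. For $n=0$ one uses the Hopf fibration $S^3\to S^2$. For $n\ge1$ I would take a non-trivial circle bundle $E\to\Sigma_g$ over a closed oriented surface of genus $g\ge n$; the bundle projection gives an epimorphism $\pi_1(E)\twoheadrightarrow\pi_1(\Sigma_g)$, and $\pi_1(\Sigma_g)$ surjects onto the free group $F_n$ by killing all but $n$ of its standard generators, so $\pi_1(E)\twoheadrightarrow F_n$. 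The branched covering construction from the proof of Theorem~\ref{t:topargument} then turns such an epimorphism into a degree-one map from $E$ onto $\#_n(S^2\times S^1)$; the point is that this construction uses only an epimorphism onto a free group, so it applies with $E$ in place of a product.

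For the ``only if'' direction, let $E\to\Sigma$ be a non-trivial circle bundle and $f\colon E\to N$ of non-zero degree. If $\Sigma=S^2$ then $\pi_1(E)$ is finite, hence so is $\pi_1(N)$, so $N$ is an elliptic manifold and is finitely covered by $S^3=\#_0(S^2\times S^1)$. So assume $\Sigma$ is aspherical; then $E$ is an aspherical Seifert manifold of $\Nil^3$- or $\widetilde{\SL}_2$-type, and $\pi_1(E)$ sits in a central extension $1\to\Z\to\pi_1(E)\to\pi_1(\Sigma)\to1$ whose centre is generated by a regular fibre $c$. After passing to the finite cover of $N$ with fundamental group $f_*(\pi_1(E))$, we may assume $f_*$ is onto, so that $f_*(c)$ is central in $\pi_1(N)$, and I would distinguish two cases. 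If $f_*(c)$ has finite order, then collapsing $N$ onto any aspherical prime summand $P$ yields a non-zero degree map, and the composite $E\to N\to P$ kills $c$ (its image in the torsion-free group $\pi_1(P)$ is trivial); being a map between aspherical spaces whose $\pi_1$ factors through $\pi_1(\Sigma)$, it is homotopic to a map factoring through the surface $\Sigma$ and hence has degree zero, a contradiction. So $N$ has no aspherical prime summand, $\pi_1(N)$ is virtually free, and $N$ is finitely covered by some $\#_n(S^2\times S^1)$. If $f_*(c)$ has infinite order, then $\pi_1(N)$ has infinite cyclic centre, so a finite cover of $N$ is Seifert fibred with infinite fundamental group (possibly $S^2\times S^1$); after a further finite cover it is an honest circle bundle over a surface, and when that surface is aspherical one invokes the fact that a non-zero degree map between aspherical Seifert manifolds is homotopic to a fibre-preserving one in order to pass from $e(E)\ne0$ to non-vanishing of the Euler number of the cover, so that $N$ is finitely covered by a non-trivial circle bundle over an aspherical surface.

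I expect the main obstacle to be the construction in case~(2) of the ``if'' direction: since $\#_n(S^2\times S^1)$ is not Seifert fibred for $n\ge2$ and admits no circle action compatible with a dominating map, the dominant map out of $E$ cannot be obtained by composing with a map into a product, so the branched covering must be built directly --- the essential point being that its only input is an epimorphism onto a free group. A secondary difficulty, in the ``only if'' direction, is to exclude the possibility that $N$ is finitely covered by a product $F\times S^1$ rather than by a non-trivial circle bundle --- equivalently, that a $\widetilde{\SL}_2$- or $\Nil^3$-manifold dominates an $\mathbb{H}^2\times\R$- or $\mathbb{E}^3$-manifold --- which is exactly where the comparison of Euler numbers via fibre-preserving representatives of non-zero degree maps is required.
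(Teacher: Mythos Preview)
Your ``only if'' direction is essentially the paper's argument (Proposition~\ref{p:essbd}), organised as a case split on the order of $f_*(c)$ rather than on rational essentialness of $N$. The one substantive difference is the final step: to deduce that the Euler number of the covering bundle is non-zero you invoke a result that non-zero degree maps between aspherical Seifert manifolds are homotopic to fibre-preserving ones, whereas the paper argues directly in homology --- the fibre class of $E$ is torsion in $H_1(E)$ since $e(E)\neq 0$, hence its image is torsion in $H_1(N)$, and since that image is (up to multiples) the fibre class of the Seifert fibration of $N$, the latter is torsion and so $e(N)\neq 0$. This is more elementary and avoids importing the fibre-preserving machinery, which in any case you would need to state and justify carefully.

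The genuine gap is in case~(2) of the ``if'' direction. You claim that the branched covering construction from Theorem~\ref{t:topargument} ``uses only an epimorphism onto a free group'' and therefore applies with $E$ in place of a product. It does not: that construction (Proposition~\ref{p:branchcov}) starts from the product map $P\times\id_{S^1}\colon T^2\times S^1\to S^2\times S^1$ and cuts and reglues along product pieces $A\times I$; the product structure of the domain is used throughout, and nothing in the argument is driven by an abstract surjection onto a free group. For non-trivial bundles the paper supplies an entirely separate construction (Proposition~\ref{p:branchcovbundles}): the Euler-number-one bundle over $T^2$ is realised as the mapping torus of $\left(\begin{smallmatrix}1&1\\0&1\end{smallmatrix}\right)$, the involution $-I$ commutes with this monodromy and hence acts on the mapping torus, and the quotient is identified with $S^2\times S^1$; iterated fibre sums then handle $n>1$. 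Your surjection $\pi_1(E)\twoheadrightarrow F_n$ is correct, and there is indeed a separate folklore fact that a closed oriented $3$-manifold whose fundamental group surjects onto $F_n$ admits a degree-one map onto $\#_n(S^2\times S^1)$ --- but that is not what Proposition~\ref{p:branchcov} proves, and you have neither proved nor correctly cited it, so as written your case~(2) is unsupported.
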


In Section~\ref{s:ess} we discuss the notion of rational essentialness in the case of three-manifolds. While this is not logically
necessary for the proofs of our main results, we find it convenient, following~\cite{KL}, to use this concept as an organizing 
principle. In Section~\ref{s:proof1}, respectively Section~\ref{s:proof2}, we then prove Theorems~\ref{t:topargument} and~\ref{t:topbundle}
for rationally essential, respectively inessential, three-manifolds. In Section~\ref{s:algargum} we reformulate these theorems
in terms of Thurston geometries and in purely algebraic terms. Finally, in Section~\ref{s:groups} we determine the three-manifold
groups presentable by products, and in Section~\ref{s:dis} we make some further remarks. These last two sections contain
two new characterizations of (aspherical) Seifert manifolds.


\section{Rational essentialness for three-manifolds}\label{s:ess}

The obstructions for domination by products found in~\cite{KL} are applicable to rationally essential manifolds in the sense of 
the following definition going back to Gromov~\cite{Gromov}:
\begin{defn}
  A closed, oriented, connected $n$-manifold $N$ is called rationally essential if 
  $$
  H_n(c_N)([N])\neq 0\in H_n(B\pi_1(N);\Q) \ , 
  $$
  where $c_N\colon N\longrightarrow B\pi_1(N)$ classifies the universal covering of $N$. 
\end{defn}
For three-manifolds, this definition can be interpreted in terms of the Kneser-Milnor prime decomposition~\cite{M}.
Recall that this says that a closed oriented connected three-manifold $N$ has an essentially unique prime decomposition
$N=N_1\#\cdots\# N_k$ under the connected sum operation. Each prime summand $N_i$ is either aspherical, is $S^1\times S^2$, 
or has finite fundamental group. 
We now have the following:
\begin{thm}\label{t:ess}
For a closed oriented connected three-manifold $N$ the following conditions are equivalent:
\begin{enumerate}
\item[\normalfont{($\Q$ESS)}] $N$ is rationally essential,
\item[\normalfont{(ASPH)}] $N$ has an aspherical summand $N_i$ in its prime decomposition,
\item[\normalfont{(NFREE)}] $N$ is not finitely covered by a connected sum $\#_n(S^2\times S^1)$,
\item[\normalfont{(ENL)}] $N$ is compactly enlargeable,
\item[\normalfont{(NPSC)}] $N$ does not admit a metric of positive scalar curvature.
\end{enumerate}
\end{thm}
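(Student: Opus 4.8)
The plan is to show that (ASPH) is equivalent to each of the other four conditions, with the Kneser--Milnor decomposition as the common thread and two monotonicity principles at its core: \emph{both rational essentialness and compact enlargeability are inherited by a closed oriented manifold $M$ whenever $M$ dominates a manifold having the property.} For rational essentialness this holds because a map $f\colon M\to N$ of non-zero degree satisfies $f_*[M]=\deg(f)[N]$ and the classifying maps fit into a homotopy-commutative square with the map on classifying spaces induced by $f$, so that map carries $H_3(c_M)([M])$ to $\deg(f)H_3(c_N)([N])$; for compact enlargeability it holds because a fixed smooth $f$ is Lipschitz and hence pulls an $\varepsilon$-contracting map on a finite cover of $N$ back to a $C\varepsilon$-contracting map, of the same non-zero degree, on the induced finite cover of $M$. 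Now \emph{($\Q$ESS) $\Leftrightarrow$ (ASPH)}: if $N$ has an aspherical prime summand $N_i$, the pinch map $N\to N_i$ collapsing the complementary summands has degree one and $N_i$ is its own classifying space, hence rationally essential, so $N$ is too; conversely, if no summand is aspherical, then by the Kneser--Milnor statement each summand is $S^2\times S^1$ or has finite fundamental group, so $\pi_1(N)$ is a free product of copies of $\Z$ and finite groups, $B\pi_1(N)$ is homotopy equivalent to a wedge of circles and classifying spaces of finite groups, $H_3(B\pi_1(N);\Q)=0$, and $N$ is rationally inessential.

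\emph{(NFREE) $\Leftrightarrow$ (ASPH)}: if no summand is aspherical then, as just noted, $\pi_1(N)$ is a free product of finitely many finite and infinite cyclic groups, hence virtually free, so $N$ has a finite cover $\tilde N$ with free fundamental group; and a closed oriented three-manifold with free fundamental group of rank $n$ is $\#_n(S^2\times S^1)$, because its prime summands have free fundamental groups, a non-trivial free group (having cohomological dimension one) cannot be the fundamental group of a closed aspherical three-manifold, and a prime oriented three-manifold with fundamental group $\Z$, respectively trivial, is $S^2\times S^1$, respectively $S^3$. Conversely, if $N$ is finitely covered by some $\#_n(S^2\times S^1)$, then that cover is rationally inessential and, by the transfer identity obtained by applying $H_3(c_N)$ to $p_*[\tilde N]=\deg(p)[N]$ together with the commuting square for the covering $p$, so is $N$; by the previous paragraph $N$ then has no aspherical summand.

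\emph{Scalar curvature}: it remains to insert (ENL) and (NPSC), which I do through (ASPH) $\Rightarrow$ (ENL) $\Rightarrow$ (NPSC) $\Rightarrow$ (ASPH). The middle implication is the Gromov--Lawson theorem that compactly enlargeable manifolds carry no metric of positive scalar curvature (orientable three-manifolds are spin, so no extra hypothesis is needed). For (NPSC) $\Rightarrow$ (ASPH) I argue contrapositively: if $N$ has no aspherical summand then, by the elliptization theorem, $N$ is a connected sum of spherical space forms and copies of $S^2\times S^1$, each of which carries a positive-scalar-curvature metric, and these survive connected sum in dimension at least three by the Gromov--Lawson/Schoen--Yau surgery construction. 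For (ASPH) $\Rightarrow$ (ENL), the monotonicity principle and the degree-one pinch map reduce the claim to showing that every closed aspherical three-manifold is compactly enlargeable; via geometrization this is done geometry by geometry — the hyperbolic and non-positively curved pieces are enlargeable because non-positive curvature implies enlargeability; the Seifert pieces of type $\mathbb{E}^3$, $\mathbb{H}^2\times\R$ or $\widetilde{\SL}_2$ are finitely covered by products $F\times S^1$ with $F$ an aspherical surface, and products of enlargeable manifolds (such as $S^1$ and closed aspherical surfaces, which are flat or hyperbolic) are enlargeable; and the $\Nil$-, $\Sol$- and non-geometric graph and mixed cases are enlargeable as well — while residual finiteness of three-manifold groups lets the covers in all these arguments be chosen finite, yielding \emph{compact} enlargeability.

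The step I expect to be the genuine obstacle is this last one, that every closed aspherical three-manifold is compactly enlargeable: it is the only place where geometrization and the structure theory of aspherical three-manifolds really enter, whereas the other implications are essentially formal once the Kneser--Milnor decomposition and the two monotonicity principles are available. A secondary point to watch is that the implication (NPSC) $\Rightarrow$ (ASPH) uses the elliptization theorem, not merely the prime decomposition, to identify the summands with finite fundamental group as spherical space forms.
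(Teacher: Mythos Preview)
Your overall strategy---organizing everything around (ASPH) via the Kneser--Milnor decomposition, and closing the loop (ASPH) $\Rightarrow$ (ENL) $\Rightarrow$ (NPSC) $\Rightarrow$ (ASPH) for the scalar-curvature conditions---matches the paper's, and your arguments for ($\Q$ESS) $\Leftrightarrow$ (ASPH), (NFREE) $\Leftrightarrow$ (ASPH), (ENL) $\Rightarrow$ (NPSC), and (NPSC) $\Rightarrow$ (ASPH) are essentially those of the paper, phrased slightly differently.

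The genuine problem is in (ASPH) $\Rightarrow$ (ENL), and it is more than the incompleteness you yourself flagged. Your claim that Seifert manifolds of type $\widetilde{\SL}_2$ are finitely covered by products $F\times S^1$ is \emph{false}: such manifolds are finitely covered by \emph{non-trivial} circle bundles over hyperbolic surfaces, never by products---indeed, one of the central points of the paper (Theorems~\ref{t:topargument} and~\ref{t:geoargument} versus Theorems~\ref{t:topbundle} and~\ref{t:geobundle}) is precisely that $\widetilde{\SL}_2$ manifolds are not even \emph{dominated} by products. So the enlargeability argument you give for that geometry collapses. Moreover, your treatment of $\Nil$, $\Sol$, and the non-geometric graph and mixed manifolds (``enlargeable as well'') is a bare assertion; none of these carry metrics of non-positive sectional curvature, nor are they finitely covered by products, so a separate argument is genuinely required in each case. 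The geometry-by-geometry route, as written, therefore has one false step and several missing ones.

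The paper sidesteps all of this by invoking a single theorem of Gromov and Lawson: a closed aspherical three-manifold whose fundamental group is residually finite and contains an infinite surface subgroup is compactly enlargeable. Residual finiteness is known for all three-manifold groups; and if $\pi_1(N)$ contains no infinite surface group (in particular no $\Z^2$), then $N$ is atoroidal and hence hyperbolic by geometrization, so compactly enlargeable directly. This dichotomy replaces the entire case analysis and handles $\widetilde{\SL}_2$, $\Nil$, $\Sol$, and mixed manifolds uniformly, since all of them have fundamental groups containing $\Z^2$.
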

The last two items are not relevant to the main results of this paper, so we will only discuss them briefly.
\begin{proof}
A connected sum is rationally essential if and only if at least one of the summands is. Since $S^1\times S^2$ and manifolds 
with finite fundamental group are not rationally essential, this proves the equivalence of ($\Q$ESS) and (ASPH). 

It is obvious that (ASPH) implies (NFREE).
For the converse assume that $N$ contains no aspherical summands in its prime decomposition, i.e.~that $N$ has the form
   \begin{align*}
      N = \underbrace{(S^2\times S^1) \#\cdots\# (S^2\times S^1)}_l \# \underbrace{(S^3/Q_{l+1}) \#\cdots\# (S^3/Q_k)}_{k-l} \ ,
   \end{align*}
where the empty connected sum ($k=0$) denotes the 3-sphere $S^3$. The summands $S^2\times S^1$ have infinite cyclic fundamental groups and the summands $S^3/Q_i$ 
have finite fundamental groups $Q_i$, $l+1\leq i\leq k$. Thus, the fundamental group of $N$ is the free product
   \begin{align*}
      \pi_1(N) = F_l\ast Q_{l+1} \ast\cdots\ast Q_k \ ,
   \end{align*}
where $F_l$ is a free group on $l$ generators. We project this free product to the direct product of the $Q_j$  to obtain the following exact sequence:
   \begin{equation}\label{eq.1}
    1 \longrightarrow \ker(\varphi)
      \longrightarrow \pi_1(N) = F_l \ast Q_{l+1} \ast\cdots\ast Q_k
      \stackrel{\varphi}\longrightarrow Q_{l+1} \times\cdots\times Q_k
      \longrightarrow 1\ .
   \end{equation}
By the Kurosh subgroup theorem, $\ker(\varphi)$ is a free group $F_n$. Since it has finite index in $\pi_1(N)$, we see that $N$ has a finite covering whose fundamental group is free. 
By Kneser's prime decomposition theorem and Grushko's theorem, we deduce that this covering is a connected sum of $n$ copies of $S^2\times S^1$. 
This means that $N$ is finitely covered by a connected sum $\#_n(S^2\times S^1)$, where $n$ is the number of generators of the free group $\ker (\varphi)$ in the exact sequence (\ref{eq.1}).

To see that (ASPH) implies (ENL) it is enough to show that any aspherical three-manifold $N$ is compactly enlargeable. This was proved 
by Gromov and Lawson~\cite[Theorem~6.1]{GL} under the assumptions that $\pi_1(N)$ is residually finite and contains an infinite surface
group. It is now known that all three-manifold groups are residually finite~\cite{H}. (This reference treats only manifolds satisfying Thurston's geometrisation conjecture,
which has now been verified by Perelman~\cite{P1,P2,KLott}.) Furthermore, if $\pi_1(N)$ contains no infinite surface group, then $N$ is atoroidal, and so is hyperbolic 
by Perelman's work~\cite{P1,P2,KLott}. Since hyperbolic manifolds are compactly enlargeable by~\cite[Prop.~3.3]{GL}, we conclude that (ASPH) implies (ENL).

Gromov and Lawson~\cite[Theorem~3.7]{GL} proved that (ENL) implies (NPSC). (Recall that all oriented three-manifolds are spin.)

Finally, (NPSC) implies (ASPH) because $S^1\times S^2$ has positive scalar curvature, and so do all three-manifolds with finite fundamental 
group by Perelman's proof of the Poincar\'e conjecture~\cite{P1,P2,MT}. A connected sum of manifolds with positive scalar curvature also has positive
scalar curvature by the construction of Gromov and Lawson; cf.~\cite[Theorem~5.4]{GL}.
\end{proof}

\begin{rem}
It was proved by Hanke and Schick~\cite{HS} that (ENL) implies ($\Q$ESS) in all dimensions. The converse is not true in dimensions $\geq 4$
by a recent result of Brunnbauer and Hanke~\cite{BH}.
\end{rem}

\section{Rationally essential targets}\label{s:proof1}

In view of Theorem~\ref{t:ess}, the proofs of Theorems~\ref{t:topargument} and~\ref{t:topbundle} split into two cases, depending on whether $N$ 
contains an aspherical summand $N_i$ in its prime decomposition, or not. In this section we deal with the case where an aspherical summand 
does appear.

The first part of Theorem~\ref{t:topargument} corresponds to the following statement:
\begin{prop}\label{p:ess}
A rationally essential closed oriented three-manifold $N$ is dominated by a product if and only if it is finitely covered by a product $F\times S^1$,
with $F$ an aspherical surface.
\end{prop}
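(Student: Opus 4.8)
The plan is to prove both directions, with the "only if" direction being the substantive one. The "if" direction is immediate: a product $F \times S^1$ is dominated by a product (itself), and a finite covering map has non-zero degree, so composing gives a dominant map from a product.

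\medskip

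For the "only if" direction, suppose $f\colon \Sigma \times S^1 \longrightarrow N$ has non-zero degree, where without loss of generality $\Sigma$ is a closed oriented surface; since $N$ is rationally essential, Theorem~\ref{t:ess} tells us $N$ has an aspherical summand, and I would first argue that $N$ must in fact be prime and aspherical. Indeed, a non-zero degree map induces a surjection $\pi_1(\Sigma \times S^1) \longrightarrow \pi_1(N)/(\text{something})$ onto a finite-index subgroup of $\pi_1(N)$; since $\pi_1(\Sigma \times S^1) = \pi_1(\Sigma) \times \Z$ has a finite-index subgroup that is a surface group times $\Z$, hence does not decompose as a nontrivial free product and has no nontrivial finite subgroups, the corresponding finite cover of $N$ is prime with infinite $\pi_1$, hence aspherical. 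So it suffices to treat $N$ aspherical and show it is finitely covered by $F \times S^1$.

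\medskip

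The heart of the argument is then to locate a normal (or finite-index) $\Z$ in $\pi_1(N)$ coming from the $S^1$ factor of the domain. Let $t$ denote (the image of) a generator of the $S^1$-direction. If $f_*(t)$ has infinite order in $\pi_1(N)$, then $\langle f_*(t) \rangle \cong \Z$; the key point is that this $\Z$ is "large" — because $f$ has non-zero degree, the image $f_*(\pi_1(\Sigma \times S^1))$ has finite index, and the $S^1$-factor is central in the domain, so its image centralizes a finite-index subgroup of $\pi_1(N)$. A three-manifold group with a finite-index subgroup having infinite center is (up to finite cover) the fundamental group of a Seifert fibered space, and in the aspherical case with the center actually realized by a non-trivial circle action, one gets that the manifold is finitely covered by a circle bundle over a surface; one must then upgrade this to a product $F \times S^1$ using that the bundle is dominated by the product $\Sigma \times S^1$ (pulling the Euler class back by a non-zero degree, hence non-zero-degree on the base, map would force the Euler number to vanish, so the bundle is trivial after passing to a finite cover). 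The remaining case is that $f_*(t)$ has finite order; then $f$ factors up to homotopy through the quotient of $\Sigma \times S^1$ by a finite cyclic action on the $S^1$, which is again a circle bundle (indeed another product $\Sigma' \times S^1$) over $\Sigma$, so this case reduces to the previous one, or alternatively one sees directly that $f$ factors through $\Sigma$ and $N$ would be dominated by a surface, contradicting asphericity in dimension three (the image of $\pi_1$ would be a surface group of cohomological dimension $2$, too small to have finite index in a $3$-manifold group with $H_3(\pi_1;\Q) \neq 0$).

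\medskip

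The main obstacle I expect is the step passing from "a finite-index subgroup of $\pi_1(N)$ has infinite center" to "$N$ is finitely covered by $F \times S^1$" — this requires invoking the structure theory of three-manifold groups (Seifert fibered spaces are precisely the aspherical three-manifolds whose fundamental groups have infinite center, after Waldhausen, Gordon--Heil, etc., modulo geometrisation), and then separately ruling out the non-trivial Euler number by the degree argument on the base surface. A cleaner route, which I would pursue in parallel, is to avoid the group-theoretic detour: since $f\colon \Sigma \times S^1 \to N$ has non-zero degree and $H^1(\Sigma \times S^1;\Q) = H^1(\Sigma;\Q) \oplus \Q$, one can try to show $N$ fibers over $S^1$ (or a good orbifold) by pulling back a suitable cohomology class, and analyze the fibration directly; but handling the general (non-injective-on-$\pi_1$) case still seems to need the same three-manifold input, so I do not expect to avoid geometrisation entirely.
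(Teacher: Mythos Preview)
Your overall strategy matches the paper's: reduce to $N$ aspherical, use the image of the $S^1$-factor to get infinite center in (a finite-index subgroup of) $\pi_1(N)$, invoke the Seifert fiber space conjecture, pass to a circle bundle cover, and kill the Euler number. Two steps, however, are not correctly argued.

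\textbf{The reduction to $N$ aspherical.} Your paragraph~2 claim---that the finite cover of $N$ is prime because $\pi_1(\Sigma)\times\Z$ is not a free product---is wrong: being a quotient of a non-free-product does not prevent being a free product (e.g.\ $\pi_1(\Sigma_g)\times\Z$ surjects onto $F_n$ for any $n\le g$). Your paragraph~3 fallback for the case $f_*(t)$ of finite order is also broken: factoring through the $\Z/k$-quotient on the $S^1$-factor just reproduces the same situation, and your ``alternative'' (that the image of $\pi_1$ would have cohomological dimension $\le 2$) fails because quotients of surface groups can be, say, $\Z^3$, which has $H_3(-;\Q)\neq 0$ and is a genuine aspherical $3$-manifold group. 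More to the point, you cannot homotope $f$ to factor through $\Sigma$ without already knowing the target is aspherical. The paper's fix is simple and you should adopt it: use Theorem~\ref{t:ess} to pick an aspherical prime summand $N_i$ and compose $f$ with the degree-one collapse $N\to N_i$. Now the target is aspherical, $\pi_1(N_i)$ is torsion-free, and the dichotomy is clean: either $f_*(t)=1$, in which case the map genuinely factors through $\Sigma$ (contradicting non-zero degree to a $3$-manifold), or $f_*(t)$ has infinite order and is central in the image, forcing $\pi_1(N)$ to be freely indecomposable and hence $N$ itself prime and aspherical.

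\textbf{The Euler number argument.} You assert that $f$ induces a base map $\bar f\colon\Sigma\to F$ of non-zero degree, but this is exactly the content that needs proof. The paper isolates this as a separate lemma: one shows that $f_*(t)$ is a power of the circle fiber (since both lie in the center of $\pi_1(N)$, which for a non-trivial bundle is infinite cyclic generated by the fiber), so $\pi\circ f$ kills the $S^1$-direction on $\pi_1$ and hence, $F$ being aspherical, factors through $\Sigma$ up to homotopy; then one compares degrees via the pullback bundle. Your parenthetical skips this, and without it the Euler-class pullback sentence has no force.
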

\begin{proof}
A manifold finitely covered by a product is of course dominated by that product.

For the converse assume that a product $\Sigma\times S^1$ of a closed, oriented, connected surface $\Sigma$ with the circle dominates 
a closed, oriented, connected rationally essential three-manifold $N$ and let  $f \colon \Sigma\times S^1 \longrightarrow N$ be a map of 
non-zero degree. Then $\Sigma\times S^1$ must be rationally essential, and so $\Sigma$ is of positive genus. 

By replacing $N$ by a finite covering if necessary, we may assume that $f$ is $\pi_1$-surjective.
Let $N=N_1\#\cdots\# N_k$ be the Kneser-Milnor prime decomposition of $N$. Each prime summand $N_i$ is either aspherical, is $S^1\times S^2$, 
or has finite fundamental group; cf.~\cite{M}. By Theorem~\ref{t:ess} the rational essentialness of $N$ is equivalent to the existence
of an aspherical summand $N_i$. Composing $f$
with the degree one map $N\longrightarrow N_i$ collapsing the connected summands other than $N_i$, we obtain a 
dominant map $\Sigma\times S^1\longrightarrow N_i$ between aspherical three-manifolds. This cannot factor through $\Sigma$,
implying that $\pi_1(f)$ must be non-trivial on the central $\Z$-subgroup generated by the $S^1$ factor. But then 
$\pi_1(f)(\Z)$ is a non-trivial central subgroup in $\pi_1(N)$, and so this group is freely indecomposable. 
Thus we may assume that $N$ itself is prime and aspherical, for we can either appeal to Perelman's proof of the Poincar\'e conjecture~\cite{P1,P2,MT} 
to conclude $N=N_i$, or we can argue that the assumption $\Sigma\times S^1\geq N$ depends only on the homotopy type of $N$, which does not change 
if we replace a manifold by its connected sum with a homotopy sphere. 

We have shown that $N$ is aspherical, and that its fundamental group has infinite center. If $N$ is Haken, then it follows from a result of 
Waldhausen~\cite{W} that $N$ is Seifert fibered. In fact, even without the Haken condition, $N$ must be Seifert fibered, by the proof of the Seifert fiber 
space conjecture (stated in~\cite[p.~484]{S} and proved by Casson--Jungreis~\cite{CJ} and Gabai~\cite{Gabai}). Therefore, after lifting 
$f$ to a suitable covering space, we may assume that $N$ is a circle bundle over an aspherical surface. It remains to show that the 
Euler number of this circle bundle is zero. We will prove this in the following lemma, thereby completing the proof of Proposition~\ref{p:ess}.
\end{proof}

\begin{lem}\label{lem}
Let $\pi\colon N\longrightarrow F$ be an oriented circle bundle with non-zero Euler number over a closed aspherical surface. 
Then every continuous map $f\colon\Sigma\times S^1\longrightarrow N$ has degree zero.
\end{lem}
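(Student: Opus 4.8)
The plan is to show that the induced map on fundamental groups cannot be ``large enough'' to support a non-zero degree, by exploiting the central extension structure coming from the circle bundle together with the fact that $\Sigma\times S^1$ has a very restricted fundamental group. Write the Euler sequence of the bundle as
\[
1 \longrightarrow \Z \longrightarrow \pi_1(N) \stackrel{p_*}{\longrightarrow} \pi_1(F) \longrightarrow 1,
\]
where the $\Z$ is central and, since the Euler number is non-zero, the class of this extension in $H^2(\pi_1(F);\Z)$ is a non-zero multiple of the fundamental class; equivalently, the image of $\Z$ in $H_1(\pi_1(N);\Q)$ is torsion, so $H_1(N;\Q)\cong H_1(F;\Q)$ and in fact $H^*(N;\Q)$ is the cohomology of a non-trivial $S^1$-bundle with $H^2(N;\Q)$ of rank $2g(F)-1$, with cup product structure governed by the non-vanishing of the Euler class.

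\smallskip

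The first step is to reduce to the case where $f$ is $\pi_1$-surjective: if $\deg f \neq 0$ then $f_*\pi_1(\Sigma\times S^1)$ has finite index in $\pi_1(N)$, so after passing to the corresponding finite cover $\tilde N \to N$ (which is again a circle bundle over a finite cover $\tilde F$ of $F$, still with non-zero Euler number, still aspherical) we may lift $f$ to a $\pi_1$-surjective map of the same non-zero degree onto $\tilde N$. So assume $f_*$ is onto. Since $\pi_1(\Sigma\times S^1) = \pi_1(\Sigma)\times\Z$ and $\pi_1(N)$ is aspherical (so $f$ is homotopic to a map inducing the given $f_*$ on $K(\pi,1)$'s, and $\deg$ is computed on $H_3$ of the groups), the whole problem becomes: a surjection $\phi\colon \pi_1(\Sigma)\times\Z \twoheadrightarrow \pi_1(N)$ cannot induce a rational epimorphism on $H_3$. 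Compose $p_*\circ\phi\colon \pi_1(\Sigma)\times\Z \to \pi_1(F)$. I would argue that the $\Z$-factor must map into (a finite-index, hence trivial-after-further-lifting piece of) the center, i.e.\ after a further finite cover $p_*\circ\phi$ kills the $\Z$-factor and restricts to a surjection $\pi_1(\Sigma)\twoheadrightarrow \pi_1(F)$; indeed $\pi_1(F)$ is a closed surface group, which is centerless (for $g\geq 2$; the torus case $F=T^2$, $g=1$, must be handled separately but there $N$ is a Nil-manifold and one can quote the same centrality argument or argue directly). Thus $\phi$ fits into a commuting ladder between the two central extensions, with the $\Z$'s mapped by multiplication by some integer $k$, and on the base a surjection $\psi\colon\pi_1(\Sigma)\to\pi_1(F)$ of surfaces.

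\smallskip

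Now the cohomological core: the Euler class $e(N) = \psi^*e$ pulled back to $\pi_1(\Sigma)$ differs by the factor $k$, so in $H^2(\pi_1(\Sigma);\Q)$ we have $\phi^*$ sending the Euler class of $N$ to $k\cdot\psi^*e$. Since $\psi$ has non-zero degree (it is $\pi_1$-surjective between surfaces, hence $|\deg\psi|\geq 1$ when it is, and in general $\deg\psi \neq 0$ once we know $\psi^*$ is injective on $H^2$; one can also just use that a $\pi_1$-surjection of surface groups is realized by a degree one map after composing with a pinch, but what we actually need is only the algebra), $\psi^*e \neq 0$. The key point is then that in $H^*(N;\Q)$ the Euler class $e_N\in H^2$ is \emph{nilpotent of order exactly two} with $e_N \cup H^1(N;\Q)$ accounting for the ``extra'' part of $H^3(N;\Q)\cong \Q$, whereas in $\Sigma\times S^1$, writing $t\in H^1(S^1)$, the fundamental class is $[\Sigma]^\vee\cup t$ and any class pulled back from $H^2(N)$ that is to pair nontrivially against $[\Sigma\times S^1]$ after cupping with a one-dimensional class must, because $\phi$ kills the $\Z = \langle S^1\rangle$ direction on the base, pull the relevant $H^1$-generator of $N$ (the fiber class, which is the one multiplying $e_N$ to hit the top) back to zero — the fiber generator of $\pi_1(N)$ is in the image of the central $\Z$, which $\phi$ sends into the $\Z$-factor of $\pi_1(\Sigma)\times\Z$, an element of $H_1$ whose dual $H^1$-class is $t$, and $t\cup(\text{anything pulled back from }H^2(N)) $ computes to $t\cup\psi^*(\text{class on }\Sigma\text{-factor})=0$ since $\psi^*$ lands in $H^2$ of the $\Sigma$-factor which is already top-dimensional there. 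Hence $f^*([N]^\vee) = 0$ in $H^3(\Sigma\times S^1;\Q)$, forcing $\deg f = 0$.

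\smallskip

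I expect the main obstacle to be making the ``the fiber $\Z$ goes into the $\Z$-factor'' step fully rigorous: a priori $\phi(\text{fiber})$ could be a product of an element of $\pi_1(\Sigma)$ and a power of the $S^1$-generator, and one must use centrality of $\phi(\text{fiber})$ in $\pi_1(N)$ plus surjectivity of $\phi$ to conclude its $\pi_1(\Sigma)$-component is central in $\pi_1(\Sigma)$, hence trivial when $g(\Sigma)\geq 2$, or at least lies in a controlled subgroup. The clean way to package all of this is probably to avoid explicit cocycles entirely: observe that $f$ factors up to homotopy through the classifying space of the extension, note that $p\circ f\colon \Sigma\times S^1\to F$ has image on $\pi_1$ of finite index, so $p\circ f$ has non-zero degree as a map to a surface \emph{times a point} — but $p\circ f$ also factors through $\Sigma$ (since the $S^1$-factor maps to the center of $\pi_1(F)$, which is trivial, so $p\circ f$ is homotopic to a composite $\Sigma\times S^1\to\Sigma\to F$), hence $p\circ f$ has zero degree as a map of \emph{three}-manifolds — combine this with the Gysin/Wang sequence of the $S^1$-bundle to see that $[N]^\vee$ lives in the image of $\cup e_N$ from $H^1(N)$, and track that $H^1$-generator back through $f^*$. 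I'd present the argument via this Gysin-sequence route, since it sidesteps delicate element-chasing and makes the role of the non-vanishing Euler number transparent.
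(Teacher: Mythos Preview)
Your overall strategy---factor through a map $\bar f\colon\Sigma\to F$ on the base and use the Euler class---matches the paper's, but the execution has a genuine gap. The central claim that a $\pi_1$-surjective map $\psi\colon\Sigma\to F$ between closed orientable surfaces must have non-zero degree is \emph{false}. For $F=T^2$ you already flag this; but it fails for higher genus too. For instance, with standard symplectic generators, the homomorphism $\pi_1(\Sigma_4)\to\pi_1(\Sigma_2)$ sending $a_1,a_2,a_3,a_4$ to $c_1,d_1,c_2,d_2$ and all $b_i$ to $1$ is surjective, yet the induced map on $H^2$ is zero (the image of $H^1(\Sigma_2)$ lands in the isotropic subspace spanned by the $a_i^*$). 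So you cannot derive a contradiction from ``$\psi^*e\neq 0$''.

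In fact the extension-class computation runs the other way: the inclusion $\pi_1(\Sigma)\cong\pi_1(\Sigma)\times\{0\}\hookrightarrow\pi_1(\Sigma)\times\Z\xrightarrow{\phi}\pi_1(N)$ is a homomorphic lift of $\psi$ through $p_*$, so the obstruction $\psi^*e\in H^2(\pi_1(\Sigma);\Z)$ vanishes; equivalently $\deg\psi=0$. (Your formula ``$k\cdot\psi^*e$'' is not quite right: pushing the trivial class forward along $\cdot k$ still gives $0=\psi^*e$, independently of $k$.) What remains---and what your write-up never completes---is to deduce $\deg f=0$ from $\deg\psi=0$. Your cup-product argument invokes a non-zero ``$e_N\in H^2(N;\Q)$'', but $\pi^*e=0$ in $H^*(N)$, so that line cannot work as written. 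The paper closes this gap cleanly with a pullback: once $\pi\circ f=\bar f\circ\pi_1$, the map $f$ factors through $\bar f^*N\to N$, and for any pullback of an oriented circle bundle the degree of the map on total spaces equals $\deg\bar f=0$. An equivalent algebraic finish is to compare the Lyndon--Hochschild--Serre spectral sequences of the two central extensions and read off $\deg f = k\cdot\deg\psi$ on the $E_\infty^{2,1}$ term; either way, this is the missing step.
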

\begin{proof}
Since $N$ is aspherical, we may assume that $\Sigma$ has positive genus. From the discussion above we may assume that
$\pi_1(f)(S^1)$ is an element of infinite order in the center of $\pi_1(N)$.

Since on a Seifert manifold elements of the center of the fundamental group are, up to taking multiples, fibers of Seifert
fibrations, cf.~\cite[p.~92/93]{J} and~\cite{S}, we may assume that $\pi_1(f)(S^1)$ is a multiple of the homotopy class of the fiber in $N$. 
(The fibration of $N$ is unique, cf.~\cite[Thm.~3.8]{S}.)
Thus the composition $\pi\circ f$ kills the homotopy class of the $S^1$-factor in $\Sigma\times S^1$. Since 
$F$ is aspherical, this implies that $\pi\circ f$ is homotopic to a map that factors through the projection 
$\pi_1\colon\Sigma\times S^1\longrightarrow\Sigma$. By the homotopy lifting property of
$\pi\colon N\longrightarrow F$, the homotopy of $\pi\circ f$ can
be lifted to a homotopy of $f$, so we may assume that $\pi\circ f = \bar f \circ\pi_1$ for some continuous map 
$\bar f\colon\Sigma\longrightarrow F$.

Since $\pi\colon N\longrightarrow F$ has non-zero Euler number, $\pi$ induces the zero map on $H_2(N,\Q)$,
and the fundamental class of $F$ is not in the image. As $\pi_1$ is surjective on $H_2$, the equation $\pi\circ f = \bar f \circ\pi_1$
shows that $\deg (\bar f)=0$. Now consider the pullback of $N$ under $\bar f$:
$$
\bar f^*N = \{ (p,x)\in\Sigma\times N \ \vert \ \bar f (p) = \pi (x) \} \ .
$$
The map $f\colon\Sigma\times S^1\longrightarrow N$ factors through $\bar f^*N$ as follows:
\begin{alignat*}{2}
f\colon\Sigma\times S^1 &\longrightarrow \bar f^*N &&\stackrel{\pi_2}{\longrightarrow} N\\
(p,\theta) &\longmapsto (p,f(p,\theta )) &&\longmapsto f(p,\theta ) \ .
\end{alignat*}
For any pullback of an oriented bundle, the degree of the map between total spaces is the same as the degree 
of the map of base spaces under which the bundle is pulled back. In our situation this says that the degree of 
$\pi_2\colon  \bar f^*N\longrightarrow N$ equals the degree of $\bar f$, which vanishes. Thus $f$ factors through 
a degree zero map, and we finally have $\deg (f) =0$.
\end{proof}

The next proposition covers the first part of Theorem~\ref{t:topbundle}.
\begin{prop}\label{p:essbd}
A rationally essential closed oriented three-manifold $N$ is dominated by a non-trivial circle bundle over a surface if and only if it is 
finitely covered by a non-trivial circle bundle over some aspherical surface.
\end{prop}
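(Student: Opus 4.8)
The plan is to mimic the proof of Proposition~\ref{p:ess}, replacing the product $\Sigma\times S^1$ by a non-trivial circle bundle and Lemma~\ref{lem} by a cup-product argument. The ``if'' direction is trivial, since a finite covering has degree one. For the converse, suppose $E$ is a non-trivial oriented circle bundle over a closed surface $\Sigma$ and $f\colon E\longrightarrow N$ has non-zero degree, with $N$ rationally essential. Then $E$ is rationally essential, so $\Sigma$ has positive genus; hence $E$ is aspherical and the centre of $\pi_1(E)$ is the infinite cyclic group generated by the fibre class $h$.

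First I would carry out exactly the reduction used in Proposition~\ref{p:ess}. Passing to a finite covering of $N$ we may assume $f$ is $\pi_1$-surjective; composing with the collapse onto an aspherical prime summand, whose existence is guaranteed by Theorem~\ref{t:ess}, yields a dominant map of aspherical three-manifolds, which forces $\pi_1(f)(h)\neq1$ --- otherwise $\pi_1(f)$ would factor through $\pi_1(E)/\langle h\rangle=\pi_1(\Sigma)$, and a map to an aspherical three-manifold factoring up to homotopy through a surface has degree zero. Thus $\pi_1(N)$ has non-trivial centre, hence is freely indecomposable, and as before Perelman's proof of the Poincar\'e conjecture (or homotopy invariance of the hypothesis) reduces us to the case where $N$ is prime and aspherical with $\pi_1(N)$ of infinite centre; the Seifert fibre space conjecture then makes $N$ Seifert fibred, and after one more finite covering $N$ becomes an oriented circle bundle $\pi\colon N\longrightarrow F$ over an aspherical surface $F$. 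Pulling all of these coverings back along $f$ replaces the domain by a finite covering $E'$ of $E$; since a finite covering of a non-trivial oriented circle bundle over an aspherical surface is again such a bundle (its Euler number is multiplied by the ratio of the base and fibre covering degrees, hence remains non-zero), we now have a non-zero degree map $f\colon E'\longrightarrow N$ from a non-trivial oriented circle bundle $E'\longrightarrow\Sigma'$ over an aspherical surface.

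It remains to prove that the Euler number of $\pi\colon N\longrightarrow F$ is non-zero, equivalently that $N\neq F\times S^1$, and this is the step I expect to carry the real content. The key point is that the rational cohomology ring of a non-trivial oriented circle bundle over an aspherical surface has trivial cup product $H^1\cup H^1\longrightarrow H^2$: by the Gysin sequence, multiplication by the non-zero Euler class is surjective onto $H^2(\Sigma';\Q)$, so $(\pi')^{*}\colon H^2(\Sigma';\Q)\to H^2(E';\Q)$ vanishes while $(\pi')^{*}\colon H^1(\Sigma';\Q)\to H^1(E';\Q)$ is an isomorphism, whence $((\pi')^{*}a)\cup((\pi')^{*}b)=(\pi')^{*}(a\cup b)=0$ for all $a,b\in H^1(\Sigma';\Q)$. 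On the other hand, if $N=F\times S^1$ --- with $F$ then necessarily a closed orientable surface of positive genus, since $N$ is oriented and aspherical --- one can choose $u,v,w\in H^1(N;\Q)$, with $u,v$ a symplectic pair pulled back from $F$ and $w$ pulled back from $S^1$, such that $\langle u\cup v\cup w,[N]\rangle\neq0$. Then
\[
0\neq\deg(f)\cdot\langle u\cup v\cup w,[N]\rangle=\langle f^{*}u\cup f^{*}v\cup f^{*}w,[E']\rangle,
\]
which is absurd because $f^{*}u\cup f^{*}v$ already vanishes in $H^2(E';\Q)$. Hence $N$ is finitely covered by the non-trivial circle bundle $\pi\colon N\longrightarrow F$, completing the proof. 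The main obstacle is isolating this cohomological invariant and checking that it survives the passage to a finite covering of $E$; the remaining reduction steps are essentially borrowed from Proposition~\ref{p:ess}.
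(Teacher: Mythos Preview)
Your argument is correct (modulo the slip that a finite covering has degree equal to its index, not one; the point is only that this is non-zero). The reduction to the case where $N$ is an aspherical circle bundle over an aspherical surface is exactly the one in the paper, and your justification that the pulled-back domain $E'$ remains a non-trivial circle bundle via the formula $e(E')=e(E)\cdot d_b/d_f$ is the standard one for Seifert invariants under finite covers.

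Where you genuinely diverge from the paper is in the final step, ruling out $N=F\times S^1$. The paper tracks the fibre class in homology: under $\pi_1(f)$ the fibre of $M$ goes to a non-trivial central element of $\pi_1(N)$, hence to a multiple of the fibre of $N$; since the fibre of $M$ is torsion in $H_1(M;\Z)$ (Euler number non-zero), its image is torsion in $H_1(N;\Z)$, so the fibre of $N$ is torsion and the Euler number of $N$ is non-zero. Your approach instead exploits the rational cohomology ring: a non-trivial circle bundle over an aspherical surface has $H^1\cup H^1=0$ in $H^2$ (Gysin), whereas $F\times S^1$ has a non-zero triple product in $H^1$, and naturality of cup product under a non-zero degree map gives the contradiction. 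Your argument has the virtue of being a clean, functorial obstruction that does not require identifying the image of the fibre with the fibre of the target Seifert fibration; the paper's argument, on the other hand, uses only $H_1$ and avoids the Gysin sequence. Both are short and either could serve as the proof.
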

\begin{proof}
Let $f\colon M\longrightarrow N$ be a map of non-zero degree, with $M$ a non-trivial circle bundle over a surface $\Sigma_g$
of genus $g$. After replacing $N$ by a suitable covering, we may assume that $f$ is $\pi_1$-surjective.
Since $N$ is assumed to be rationally essential, $\pi_1(M)$ must be infinite, and so $g>0$. This means that 
$M$ is aspherical and we have a non-trivial central extension
$$
1\longrightarrow\Z\longrightarrow\pi_1(M)\longrightarrow\pi_1(\Sigma_g)\longrightarrow 1 \ .
$$

The prime decomposition of $N$ contains an aspherical summand $N_i$ by Theorem~\ref{t:ess}. Composing $f$
with the degree one map $N\longrightarrow N_i$ collapsing the connected summands other than $N_i$, we obtain a 
dominant map $M\longrightarrow N_i$ between aspherical three-manifolds. This cannot factor through $\Sigma_g$,
implying that $\pi_1(f)$ must be non-trivial on the central $\Z$-subgroup generated by the circle fibers in $M$. But then 
$\pi_1(f)(\Z)$ is a non-trivial central subgroup in $\pi_1(N)$, and so $N$ is prime and therefore irreducible and aspherical 
itself. As in the proof of Proposition~\ref{p:ess} we conclude that $N$ is Seifert fibered.

After replacing $M$ and $N$ by suitable coverings, we may assume that $N$ is also a circle bundle. It remains to
prove that it has non-trivial Euler class. Now $\pi_1(f)$ sends the element of $\pi_1(M)$ represented by
the circle fibers in $M$ to a non-trivial element of the center of $\pi_1(N)$. This group is torsion-free, so this 
non-trivial element has infinite order. Some multiple of it is the fiber of a Seifert fibration of $N$, cf.~\cite[p.~92/93]{J}.
As mentioned before, we may assume that this Seifert fibration is a circle bundle. Since the fiber in $M$ has finite 
order in homology because the Euler class of $M$ was non-zero, it follows that the circle fiber in $N$, being, up to taking multiples,
the image under $H_1(f)$ of the circle fiber in $M$, also has finite order in homology, and so the Euler class of $N$ 
must be non-zero.
\end{proof}

\section{Rationally inessential targets}\label{s:proof2}

In this section we prove Theorems~\ref{t:topargument} and~\ref{t:topbundle} in the case of rationally inessential manifolds,
i.~e.~those with no aspherical summand in their prime decomposition. The proof is constructive, exhibiting certain dominant
maps as branched coverings.

The second part of Theorem~\ref{t:topargument} corresponds to the following statement:
\begin{prop}\label{p:iness}
Every rationally inessential three-manifold is dominated by a product.
\end{prop}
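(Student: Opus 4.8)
The plan is to reduce at once to the model manifolds $\#_n(S^2\times S^1)$ and then to construct a non-zero degree map onto each of them from a suitable product.

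\emph{Reduction.} By the equivalence of \normalfont{($\Q$ESS)} and \normalfont{(NFREE)} in Theorem~\ref{t:ess}, a rationally inessential $N$ is finitely covered by $\#_n(S^2\times S^1)$ for some $n\geq 0$. A finite covering has non-zero degree and the domination relation is transitive, so it suffices to show that each $\#_n(S^2\times S^1)$ is dominated by a product. The cases $n=0,1$ are immediate: $S^3$ is dominated by $T^3$ via the degree-one map collapsing the complement of an open ball, and for $n=1$ the map $r\times\id_{S^1}\colon T^2\times S^1\to S^2\times S^1$ works, where $r\colon T^2\to S^2$ is the degree-one collapse of the complement of an open disc. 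So fix $n\geq 2$.

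\emph{The construction.} Write $P:=S^2\setminus(n+1\text{ open discs})$ and $Y:=S^3\setminus(n+1\text{ open balls})$. The double of $P$ is the closed orientable surface $\Sigma_n$ of genus $n$, so the double of $P\times S^1$ along its $n+1$ boundary tori is $\Sigma_n\times S^1$; and since $D(B^3)=S^3$ and doubling along each additional interior boundary sphere inserts an $S^2\times S^1$-summand, the double of $Y$ along its $n+1$ boundary spheres is $S^3\#\#_n(S^2\times S^1)=\#_n(S^2\times S^1)$. Hence it is enough to produce a degree-one map of pairs $\phi\colon(P\times S^1,\partial)\to(Y,\partial)$ carrying each boundary torus onto the corresponding boundary sphere by a degree-one map: doubling $\phi$ then yields a degree-one, hence non-zero degree, map $\Sigma_n\times S^1\to\#_n(S^2\times S^1)$. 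To build $\phi$, start from the product map $r\times\id\colon T^2\times[0,1]\to S^2\times[0,1]$, and observe that $T^2\times[0,1]$ is (annulus)$\,\times S^1=(S^2\setminus 2\text{ discs})\times S^1$ while $S^2\times[0,1]=S^3\setminus 2\text{ balls}$; this is the $n=1$ situation. Now choose inside $T^2\times[0,1]$ a collection of $n$ disjoint ``vertical'' solid tori, each of the form (small disc in the annulus)$\,\times S^1$, positioned — together with a suitable collapse $r$ — so that $r\times\id$ sends each of them onto one of $n$ disjoint $3$-balls in $S^2\times[0,1]$ by a degree-one map, and so that nothing else maps into the interiors of those balls (with $r$ the collapse of the complement of an annulus in $T^2$, the image of such a solid torus is a lune disc in $S^2$ times a short interval, i.e.\ a ball). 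Deleting the $n$ solid tori turns the domain into $P\times S^1$ and their images into balls, turning the target into $Y$; a Mayer--Vietoris comparison of the relative fundamental classes shows the restricted map $\phi\colon P\times S^1\to Y$ still has degree one, and by construction each of the $n+1$ boundary tori maps onto a boundary sphere with degree one. This $\phi$ is of branched-covering type, and the same scheme with $S^2$ replaced by $S^k$ gives the higher-dimensional generalization.

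\emph{Main obstacle.} Everything apart from the construction of $\phi$ is routine. The point requiring care is to arrange the configuration of the $n$ vertical solid tori and the collapse $r$ precisely enough that each solid torus really maps onto a ball by a degree-one map with no other preimages meeting that ball, and then to verify via the Mayer--Vietoris computation that the restriction to the complement keeps degree one. (A purely obstruction-theoretic existence proof for $\phi$ would be less clean, since $Y$ is simply connected whereas $\pi_1(P\times S^1)=F_n\times\Z$, so one would have to show that the relevant obstruction in $H^3(P\times S^1,\partial;\pi_2 Y)$ vanishes; the explicit construction avoids this.)
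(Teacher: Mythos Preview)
Your reduction to the targets $\#_n(S^2\times S^1)$ via Theorem~\ref{t:ess} is correct and is exactly what the paper does, and the trivial cases $n=0,1$ are fine. Your doubling scheme for $n\geq 2$ is also close in spirit to the paper's construction. But the heart of your argument---the map $\phi\colon P\times S^1\to Y$---has a genuine gap, and it is precisely at the point you yourself flag as the ``main obstacle''.

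The problem is the map $r$. For your drilling to work you need a degree-one map $r\colon T^2\to S^2$ under which a ``vertical'' annulus $J_a\times S^1\subset T^2$ is sent onto a disc in $S^2$, with no other preimages. Your parenthetical justification, ``with $r$ the collapse of the complement of an annulus in~$T^2$'', does not describe a map to $S^2$ at all: collapsing an essential annulus in $T^2$ to a point yields $S^2\vee S^1$, not $S^2$. And the standard degree-one collapse $r$ (crush the complement of a \emph{disc}) sends a vertical annulus $J_a\times S^1$ either onto all of $S^2$ or to the basepoint, never to a proper disc with exactly that annulus as preimage. So the ``lune disc times a short interval'' picture is not backed by any $r$ you have produced, and arranging such preimage control for a degree-one map (if possible at all) would require real work, not a one-line remark. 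There is also an off-by-one slip: starting from the annulus (two boundary circles) and deleting $k$ interior discs yields $S^2$ minus $k+2$ discs, so you should drill $n-1$ solid tori, not $n$.

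The paper sidesteps this difficulty by abandoning degree one and using instead the degree-\emph{two} branched cover $P\colon T^2\to S^2$ (the pillowcase/Weierstrass map). This map has your desired property automatically: the preimage of a disc in $S^2$ containing exactly two of the four branch points \emph{is} an annulus in $T^2$ wrapping once around a circle factor. Removing such an annulus times an interval from $T^3=T^2\times S^1$ and the corresponding $3$-ball from $S^2\times S^1$, then doubling, gives a $\pi_1$-surjective branched double cover $\Sigma_2\times S^1\to\#_2(S^2\times S^1)$. For general $n$ the paper does not iterate the drilling as you propose; it simply notes that $\#_n(S^2\times S^1)$ is an $(n{-}1)$-sheeted \emph{unbranched} cover of $\#_2(S^2\times S^1)$ and takes the fiber product. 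This is both shorter and avoids the preimage-control issue entirely.
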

Since we have shown in the proof of Theorem~\ref{t:ess} that rationally inessential three-manifolds are finitely covered by 
connected sums of copies of $S^1\times S^2$, it suffices to prove the following:
\begin{prop}\label{p:branchcov}
Let $\Sigma_n$ be a closed, oriented surface of genus $n$. For every $n$ the manifold $\Sigma_n\times S^1$ is a $\pi_1$-surjective
branched double covering of $\#_n(S^2\times S^1)$.
\end{prop}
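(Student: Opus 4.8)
The plan is to obtain the branched covering as a quotient by an orientation-preserving involution. Write $\Sigma_n$ as the double $DP$ of the planar surface $P=S^2\setminus(n+1\text{ open disks})$; then $\chi(DP)=2\chi(P)=2-2n$, so $DP$ is the closed orientable surface of genus $n$. The double carries the reflection $r\colon DP\to DP$ that interchanges the two copies of $P$; this is an orientation-reversing involution with fixed-point set $\operatorname{Fix}(r)=\partial P$ a disjoint union of $n+1$ circles and with quotient $DP/r=P$. Let $\rho\colon S^1\to S^1$ be a reflection (two fixed points, quotient an interval) and set $\tau=r\times\rho$ on $\Sigma_n\times S^1$. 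Since $r$ and $\rho$ are both orientation-reversing, $\tau$ is orientation-preserving; its fixed-point set $\operatorname{Fix}(r)\times\operatorname{Fix}(\rho)$ is a disjoint union of $2(n+1)$ circles, and along it $\tau$ acts on the normal $2$-plane as $-\id$, i.e.\ as rotation by $\pi$. Hence $M:=(\Sigma_n\times S^1)/\tau$ is a closed oriented $3$-manifold, and the projection $q\colon\Sigma_n\times S^1\to M$ is a branched double covering, branched over the $2(n+1)$-component link $L\subset M$ that is the image of $\operatorname{Fix}(\tau)$.

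The next step is to identify $M$ with $\#_n(S^2\times S^1)$. Taking as fundamental domain for $\tau$ the region $DP\times[0,\pi]$, where $[0,\pi]$ is one of the two arcs of $S^1$ joining the fixed points of $\rho$, one sees that $\tau$ makes no identification in the interior, while on the two ends $DP\times\{0\}$ and $DP\times\{\pi\}$ it acts as $r\times\id$. Thus $M$ is $DP\times[0,\pi]$ with each end folded by $r$ onto $P$. Writing $DP=P^{+}\cup_{\partial P}P^{-}$ and following the folding, one checks that $M$ is obtained from the two copies $P^{\pm}\times[0,\pi]$ of the $3$-manifold $P\times[0,\pi]$ by gluing them along the whole of their common boundary $(P\times\{0,\pi\})\cup(\partial P\times[0,\pi])$ via the identity; that is, $M=D(P\times[0,\pi])$. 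Now $P\times[0,\pi]$ is a handlebody of genus $n$ --- since $P$ is a disk with $n$ open subdisks removed, $P\times[0,\pi]$ is a $3$-ball with $n$ unknotted, unlinked tubes drilled out --- and the double of a genus-$n$ handlebody is $\#_n(S^2\times S^1)$. This gives the required identification.

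It remains to check that $q$ is $\pi_1$-surjective. Being a connected double branched covering, $q$ restricts over $M\setminus L$ to a connected double covering, so $\pi_1(M\setminus L)$ surjects onto $\Z/2$ with kernel the image of $\pi_1\bigl((\Sigma_n\times S^1)\setminus q^{-1}(L)\bigr)$, while $\pi_1(M)$ is the quotient of $\pi_1(M\setminus L)$ by the normal closure of the meridians of the components of $L$. Since every such meridian maps to the generator of $\Z/2$ and becomes trivial in $\pi_1(M)$, that kernel already surjects onto $\pi_1(M)$; and as the kernel factors through $\pi_1(\Sigma_n\times S^1)$, the induced map $q_*$ is onto. (Concretely, in the model above a fibre $\{x\}\times S^1$ with $x\in\operatorname{Fix}(r)$ maps under $q$ to a loop that traverses an arc and returns, hence is null-homotopic, which is what kills the extra $\Z/2$.)

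The crucial and only non-formal step is the identification of the quotient. One should note that the first involution one might try --- a hyperelliptic involution of $\Sigma_n$ with quotient $S^2$, crossed with $\id_{S^1}$ --- yields $S^2\times S^1$ rather than $\#_n(S^2\times S^1)$, so a genuinely non-product involution is needed; the work is exactly the cut-and-paste argument showing $M=D(P\times[0,\pi])=\#_n(S^2\times S^1)$. The same scheme works verbatim with the (two-dimensional) handlebody $P$ replaced by a handlebody of any dimension, giving the high-dimensional generalisation announced in the introduction.
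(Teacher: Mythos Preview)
Your proof is correct and takes a genuinely different route from the paper's. The paper proceeds in three stages: it first takes the hyperelliptic involution $P\colon T^2\to S^2$ and crosses it with $\id_{S^1}$ to get the case $n=1$; then it removes from $S^2\times S^1$ a ball containing two branch points (whose preimage in $T^3$ is a product $A\times I$ with $A$ an annulus) and doubles to obtain $\Sigma_2\times S^1\to\#_2(S^2\times S^1)$; finally, for general $n$ it uses that $\#_n(S^2\times S^1)$ is an $(n-1)$-sheeted unbranched cover of $\#_2(S^2\times S^1)$ and forms the fiber product. In contrast, you use a single, uniform construction for all $n$ at once: instead of a holomorphic involution crossed with the identity, you take an anti-holomorphic involution $r$ (the reflection of $\Sigma_n=DP$ swapping the two sheets) crossed with a reflection $\rho$ of $S^1$, and identify the quotient directly as the double of the genus-$n$ handlebody $P\times I$. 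Your approach is cleaner and more structural---it avoids the separate treatment of small $n$ and the fiber-product trick, and, as you note, it transparently generalizes to higher dimensions by replacing the planar surface $P$ with a higher-dimensional handlebody. The paper's approach, on the other hand, makes the $n=1$ case very concrete via the classical Weierstrass map, and its inductive pull-back step is an elegant way to pass from $n=2$ to arbitrary $n$ without further cut-and-paste. It is also worth noting that the involution you correctly dismiss as ``the first one might try'' is precisely the one the paper uses as its starting point for $n=1$; the paper then has to work to build up the connected sum, whereas your non-product involution lands on $\#_n(S^2\times S^1)$ in one shot.
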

\begin{proof}
\begin{figure}
\labellist
\pinlabel $\stackrel{P}\longrightarrow$ at 300 87
\pinlabel $\cong$ at 590 87
\endlabellist
\centering
\includegraphics[width=12cm]{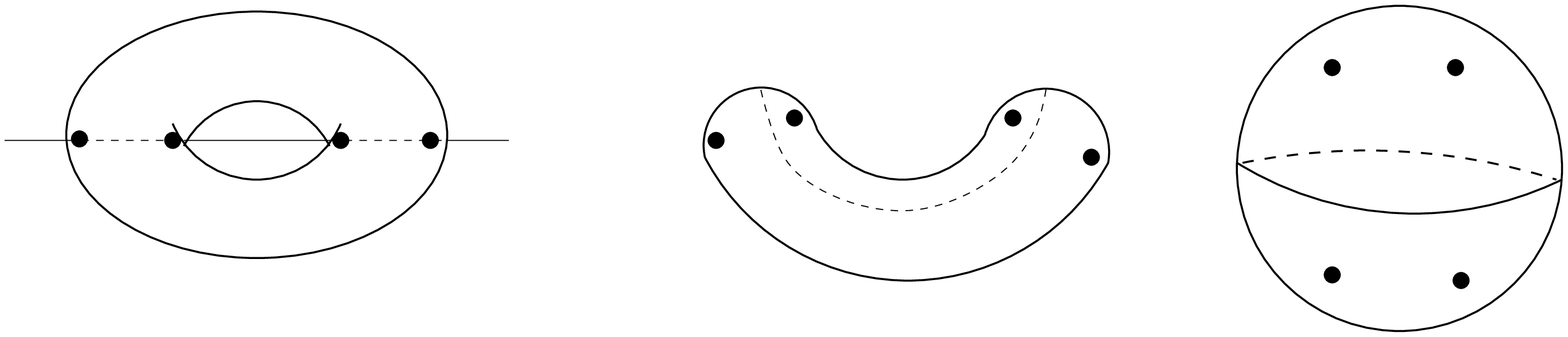}
\caption{\small The branched covering $P \colon T^2 \longrightarrow S^2$.}
\label{f:branchcov1}
\end{figure}

The $2$-torus $T^2$ is a branched double covering of $S^2$ with four branch points.
We denote this branched covering, which is the quotient map for an involution on $T^2$, by $P \colon T^2 \longrightarrow S^2$; see Figure~\ref{f:branchcov1}. (The letter $P$ stands
either for ``pillowcase'', or for the Weierstrass $\mathfrak{p}$-function.)
We multiply $P$ by the identity map on $S^1$ to obtain a branched double covering
   \begin{equation}\label{eq.2}
      P\times \id_{S^1} \colon T^2\times S^1 \longrightarrow S^2\times S^1 \ .
   \end{equation}
 This is the case $n=1$ in the claim. 
 
\begin{figure}
\labellist
\pinlabel {\tiny $D^2=P(A)$} at 251 143
\endlabellist
\centering
\includegraphics[width=8cm]{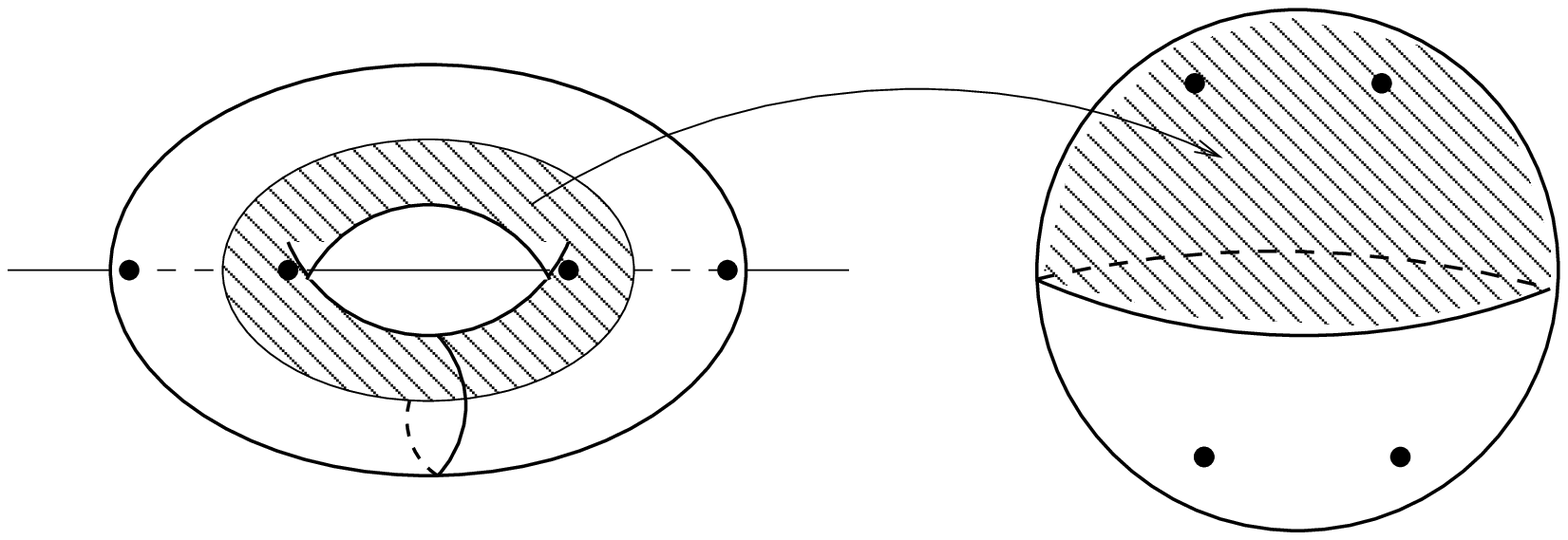}
\caption{\small The preimage of $D^2$, with two branched points, is an annulus in $T^2$.}
\label{f:branchcov2}
\end{figure}

Now let $D^2$ be a $2$-ball in $S^2$ that contains exactly two branch points of $P$ in its interior, as shown in Figure~\ref{f:branchcov2}, 
and let $I$ be an interval in $S^1$. The product $D^2\times I$ is a $3$-ball $D^3$ in $S^2\times S^1$. The preimage of this ball under 
$P\times \id_{S^1}$ is
   \begin{align*}
      (P\times \id_{S^1})^{-1}(D^2\times I) = A\times I \ ,
   \end{align*}
where $A$ is an annulus in $T^2$; see Figure~\ref{f:branchcov2}. We remove this $D^3$ from $S^2\times S^1$ and its preimage from $T^2\times S^1$ 
to obtain a branched double covering
   \begin{equation}\label{eq.3}
      (T^2\times S^1)\setminus (A\times I) \longrightarrow (S^2\times S^1)\setminus (D^2\times I) \ ,
   \end{equation}
where $(T^2\times S^1) \setminus (A\times I) = (T^2 \setminus D^2)\times S^1$. Taking the double of \eqref{eq.3} we obtain a branched double covering
   \begin{equation}\label{eq:n=2}
      \Sigma_2\times S^1 \longrightarrow(S^2\times S^1) \# (S^2\times S^1) \ ,
   \end{equation}
which is $\pi_1$-surjective by construction. This gives the case $n=2$ in the claim.

Finally note that, for arbitrary $n$, the connected sum $\#_n(S^2\times S^1)$ is an $(n-1)$-sheeted unramified covering of $(S^2\times S^1) \# (S^2\times S^1)$. 
Taking the fiber product with~\eqref{eq:n=2}, we obtain the desired $\pi_1$-surjective branched double covering of $\#_n(S^2\times S^1)$ by
$\Sigma_n\times S^1$. This completes the proof.
\end{proof}

Proposition~\ref{p:branchcov} together with Proposition~\ref{p:ess} completes the proof of Theorem~\ref{t:topargument}.
For Theorem~\ref{t:topbundle} we need the following:
\begin{prop}\label{p:inessbundles}
Every rationally inessential three-manifold is dominated by a non-trivial circle bundle over a surface.
\end{prop}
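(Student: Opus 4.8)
The plan is to reduce Proposition~\ref{p:inessbundles} to Proposition~\ref{p:branchcov} by upgrading the product $\Sigma_n \times S^1$ to a suitable non-trivial circle bundle, exploiting the same mechanism (branched coverings, then unramified coverings) that was used for the product case. As in Proposition~\ref{p:iness}, by Theorem~\ref{t:ess} it suffices to exhibit, for every $n$, a non-trivial circle bundle over a surface dominating $\#_n(S^2 \times S^1)$. First I would observe that any non-trivial circle bundle $M \to \Sigma_n$ of non-zero Euler number maps to $\Sigma_n \times S^1$ by a degree-one map: collapsing the $S^1$-factor to a point inside the disc where the Euler number is concentrated (equivalently, taking the map that is the bundle projection away from one fibre and is fibrewise the "pinch" $M|_{D^2} \cong D^2 \times S^1 \to D^2 \times S^1 / (\partial D^2 \times S^1)$ on the remaining disc) gives a degree-one map $M \longrightarrow \Sigma_n \times S^1$. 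Composing with the branched double covering of Proposition~\ref{p:branchcov} then produces a non-zero degree map $M \longrightarrow \#_n(S^2 \times S^1)$, which is all we need.

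The more self-contained alternative — and probably the one worth writing — is to mimic the construction in Proposition~\ref{p:branchcov} directly at the level of circle bundles. Start from a non-trivial circle bundle $M_1 \to T^2$ with Euler number $1$; applying $P \colon T^2 \to S^2$ one should be able to realize $M_1$ as a branched double covering of an appropriate Seifert fibration over $S^2$. Removing $D^3$-neighbourhoods as in the proof of Proposition~\ref{p:branchcov}, doubling, and then taking fibre products with the $(n-1)$-sheeted unramified covering $\#_n(S^2\times S^1) \to (S^2\times S^1)\#(S^2\times S^1)$ yields a non-trivial circle bundle over $\Sigma_n$ branched-covering $\#_n(S^2\times S^1)$. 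One must check that the Euler number stays non-zero under these operations: the doubling adds the Euler numbers of the two halves (each non-zero of the same sign), and passing to a $k$-sheeted fibrewise-trivial cover multiplies the Euler number by the sheet number, so non-triviality is preserved.

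The main obstacle is bookkeeping the circle-bundle structure through the cut-and-paste. In Proposition~\ref{p:branchcov} the $S^1$-factor was literally a product factor, so the operations were transparent; here the fibre is twisted, and one has to be careful that (i) the branched covering $M_1 \to (\text{Seifert space over } S^2)$ is fibre-preserving, so that removing $A \times I$ versus $D^2 \times I$ still makes sense as a removal of a solid torus neighbourhood of a fibre, (ii) the doubling glues the two halves along their boundary tori compatibly with the fibrations, and (iii) the resulting total space really is a circle bundle over $\Sigma_n$ rather than merely a Seifert fibration with exceptional fibres. Point (iii) can be handled by choosing the discs and arcs to avoid the branch points in the base once $n \geq 1$, or alternatively by passing to a further finite cover unramified over the exceptional fibres, which only multiplies the Euler number and hence keeps it non-zero. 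Given the delicacy of making this explicit, the cleanest write-up is the first approach: just note the degree-one pinch map $M \to \Sigma_n \times S^1$ from any non-trivial bundle and invoke Proposition~\ref{p:branchcov}.
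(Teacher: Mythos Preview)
Your ``cleanest'' first approach has a genuine error: there is no degree-one (indeed no non-zero degree) map from a non-trivial circle bundle $M$ over $\Sigma_n$ to the product $\Sigma_n\times S^1$. The obstruction is elementary: when the Euler number $e\neq 0$ one has $H_1(M;\Q)\cong\Q^{2n}$, whereas $H_1(\Sigma_n\times S^1;\Q)\cong\Q^{2n+1}$, and a non-zero degree map must be surjective on rational homology. The ``pinch'' you describe does not produce a map to $\Sigma_n\times S^1$; the circle fibre of $M$ has finite order in $H_1(M)$ and cannot hit the free $S^1$-factor of the target. In fact the paper explicitly records (see Subsection~\ref{ss:W} and the discussion around Lemma~\ref{lem}) that there are no non-zero degree maps in \emph{either} direction between trivial and non-trivial circle bundles over aspherical surfaces. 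So composing with Proposition~\ref{p:branchcov} is not available.

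Your second approach is in the right spirit and is essentially what the paper does, but you have not found the mechanism that makes the bookkeeping work. The paper's key observation is to view the Euler-number-one bundle $M_1\to T^2$ not as a circle bundle but as the mapping torus of $\varphi=\bigl(\begin{smallmatrix}1&1\\0&1\end{smallmatrix}\bigr)$. Since $\varphi$ commutes with the involution $\iota=-\id$, the branched cover $P\colon T^2\to S^2$ extends fibrewise over the mapping-torus circle to give a branched double cover $M_1\to S^2\times S^1$ (the quotient is the mapping torus of the induced map on $S^2$, which is isotopic to the identity). For $n>1$ the paper then fibre-sums $n$ copies of $M_1$ along circle fibres chosen compatibly with this branched covering, obtaining the Euler-number-$n$ bundle over $\Sigma_n$ as a branched double cover of $\#_n(S^2\times S^1)$. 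This sidesteps exactly the issues (i)--(iii) you flagged: the covering is fibre-preserving by construction, and no exceptional fibres ever appear.
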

This, together with Proposition~\ref{p:essbd}, completes the proof of Theorem~\ref{t:topbundle}. 
Since every rationally inessential three-manifold is finitely covered by some $\#_n(S^2\times S^1)$ by the 
proof of Theorem~\ref{t:ess}, Proposition~\ref{p:inessbundles} is a consequence of the following statement.
\begin{prop}\label{p:branchcovbundles}
For every $n$ the connected sum $\#_n(S^2\times S^1)$ admits a $\pi_1$-surjective branched double covering 
by a non-trivial circle bundle over a surface.
\end{prop}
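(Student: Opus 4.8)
The plan is to mimic the construction of Proposition~\ref{p:branchcov}, but to replace the trivial $S^1$-factor by a non-trivial circle bundle in a way that is compatible with the branched covering $P\times\id_{S^1}$. The key observation is that the base $S^2\times S^1$ of the branched covering~\eqref{eq.2} carries non-trivial circle bundles, and we want one whose total space still covers $\#_n(S^2\times S^1)$ after doubling. Concretely, I would first build the analogue of~\eqref{eq.2} with a non-trivial bundle: let $E\longrightarrow\Sigma_2$ be a non-trivial oriented circle bundle, restrict it over the subsurface $T^2\setminus D^2\subset\Sigma_2$ (where we identify $\Sigma_2$ with the double of $T^2\setminus D^2$), and arrange that its restriction to the complementary copy of $T^2\setminus D^2$ is trivial, so that the bundle is ``supported'' on one side of the doubling curve. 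Since the Euler class is additive under this kind of decomposition, a non-trivial $E$ exists with Euler number concentrated on the first half.

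Next I would lift the branched covering $\Sigma_2\times S^1\longrightarrow(S^2\times S^1)\#(S^2\times S^1)$ of~\eqref{eq:n=2} to the total space of $E$. The point is that~\eqref{eq:n=2} was obtained by doubling the map~\eqref{eq.3} on $(T^2\setminus D^2)\times S^1\longrightarrow(S^2\times S^1)\setminus(D^2\times I)$, which is the restriction of $P\times\id_{S^1}$ and is therefore a genuine (unbranched) covering away from the branch locus. Over the half where $E$ is non-trivial, I pull the circle bundle $E$ down along this covering to get a non-trivial circle bundle $\bar E$ on $(S^2\times S^1)\setminus(D^2\times I)$ — here I use that the covering is finite and that a bundle downstairs pulls back to $E$ upstairs iff the corresponding Euler classes match under transfer, which can be arranged by taking $E$ to be itself pulled back from downstairs to begin with. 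Doubling then produces a non-trivial circle bundle $\hat M$ over $\Sigma_2$ (the total space) mapping to a closed three-manifold $X$ by a $\pi_1$-surjective branched double cover, where $X$ is obtained by doubling $(S^2\times S^1)\setminus(D^2\times I)$ with a non-trivial bundle glued in. One then checks that $X$ is still $(S^2\times S^1)\#(S^2\times S^1)$ up to homotopy — or, more robustly, that $X$ is dominated by $\#_2(S^2\times S^1)$ and conversely dominates it — so that composing with the collapse/covering maps and the $(n-1)$-sheeted cover $\#_n(S^2\times S^1)\to\#_2(S^2\times S^1)$ yields the claim for all $n$ exactly as in Proposition~\ref{p:branchcov}.

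Alternatively, and perhaps more cleanly, I would observe that $\#_n(S^2\times S^1)$ itself carries non-trivial circle bundle structures only in a degenerate sense, so instead of lifting bundles I would directly construct a non-trivial circle bundle $M$ over $\Sigma_n$ together with a degree-two branched map $M\longrightarrow\#_n(S^2\times S^1)$ by the following device: take the branched covering $q\colon\Sigma_n\times S^1\longrightarrow\#_n(S^2\times S^1)$ of Proposition~\ref{p:branchcov}, and note its deck involution $\tau$ acts on $\Sigma_n\times S^1$; choose a non-trivial circle bundle $M\to\Sigma_n$ admitting a fibrewise involution covering $\tau$ on the base, with quotient still $\#_n(S^2\times S^1)$. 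The existence of such an equivariant non-trivial bundle is where the genuine work lies.

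The hard part will be the last step of whichever route one takes: producing a non-trivial circle bundle over $\Sigma_n$ that is genuinely compatible with the branched double-cover structure — i.e.\ that the bundle can be ``pushed down'' through the branch points so that the quotient is still (homotopy equivalent to) $\#_n(S^2\times S^1)$ and the map is $\pi_1$-surjective of degree two. Near a branch point the circle fibre direction and the branching direction interact, so one must check that a non-trivial Euler class does not obstruct the descent; I expect this to come down to a local model computation on $D^2\times S^1$ showing the involution can be chosen to preserve a non-trivial (orbifold) bundle, together with the additivity of the Euler number under the doubling decomposition to guarantee the total bundle is non-trivial.
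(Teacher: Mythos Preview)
Your proposal does not yet contain a proof; the crucial step you yourself flag as ``the hard part'' is in fact an obstruction, not just a computation to be filled in.

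In your second (cleaner) route you ask for a non-trivial circle bundle $M\to\Sigma_n$ carrying a \emph{fibrewise} involution covering the deck involution on the base, with quotient $\#_n(S^2\times S^1)$. But a fibrewise involution (one preserving each circle fibre, or at least the fibration) necessarily yields a quotient that is again Seifert fibered over the quotient orbifold. For $n\geq 2$ the manifold $\#_n(S^2\times S^1)$ is not prime, hence not Seifert fibered, so no such involution can exist. Your first route has the same underlying problem, hidden behind the step ``one then checks that $X$ is still $(S^2\times S^1)\#(S^2\times S^1)$ up to homotopy'': if you have genuinely pushed a circle-bundle structure down, $X$ will be Seifert and this check will fail. (There is also a separate issue: bundles pull back along maps, they do not in general push forward along branched covers; the transfer argument you sketch does not produce a bundle downstairs.)

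The paper's idea is to abandon the circle-bundle fibration when building the involution and use a \emph{different} fibration of the same total space. The Euler-number~$1$ bundle $M\to T^2$ is also the mapping torus of $\varphi=\begin{pmatrix}1&1\\0&1\end{pmatrix}$, i.e.\ a $T^2$-bundle over $S^1$. The hyperelliptic involution $\iota=-\id$ commutes with $\varphi$, so it acts on $M$ preserving the \emph{torus} fibres; the quotient is then the mapping torus of the induced (orientation-preserving) diffeomorphism of $S^2=T^2/\iota$, namely $S^2\times S^1$. This is the $n=1$ case; the general case is obtained by fibre-summing $n$ copies of $M$ along circle fibres chosen inside torus fibres so that the involutions match up, yielding an Euler-number~$n$ bundle over $\Sigma_n$ branched-double-covering $\#_n(S^2\times S^1)$. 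You also need to treat $n=0$ separately (pull back the Hopf fibration along a branched double cover $S^2\to S^2$), which your proposal omits.
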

\begin{proof}
For $n=0$, the empty connected sum is, by convention, the three-sphere $S^3$, which, via the Hopf fibration, is a non-trivial circle
bundle over $S^2$. Pulling back the Hopf fibration under a branched double cover $S^2\longrightarrow S^2$, we 
obtain the desired double branched cover of $S^3$.

For $n=1$ we prove that the total space $M$ of the circle bundle with Euler number $=1$ over $T^2$ is a  
$\pi_1$-surjective branched double covering of $S^2\times S^1$. Start by considering $M$ as 
the mapping torus of the linear torus diffeomorphism given by the matrix 
$$
\varphi =  \begin{pmatrix} 1 & 1\\ 0 & 1\end{pmatrix} \ ,
$$
and recall that the double branched cover $P\colon T^2\longrightarrow S^2$ in Figure~\ref{f:branchcov1} 
is the quotient map for the involution 
$$
\iota =  \begin{pmatrix} -1 & 0\\ 0 & -1\end{pmatrix} \ .
$$
Since $\iota$ commutes with $\varphi$, it induces a fiber-preserving involution, also denoted $\iota$, of the 
mapping torus $M=M(\varphi)$. The quotient $M/\iota$ is the mapping torus of the diffeomorphism of $T^2/\iota = S^2$
induced by $\varphi$. This diffeomorphism is orientation-preserving, and so $M/\iota = S^2\times S^1$. The projection
$P_M\colon M\longrightarrow M/\iota = S^2\times S^1$ given by the quotient map for $\iota$ is the desired $\pi_1$-surjective 
double branched cover. On every fiber it coincides with $P$.

To deal with the case $n>1$, we revert to thinking of $M$ as a circle bundle over $T^2$, and we fiber sum $n$
copies of this circle bundle to obtain a circle bundle with Euler number $=n$ over $\Sigma_n$. We can perform this 
fiber sum in such a way that the branched double covering maps $P_M$ on the different summands fit together
to give the desired $\pi_1$-surjective branched double covering of $\#_n(S^2\times S^1)$. Recall that the circles 
of the circle fibration of $M$ over $T^2$ are contained in the fibers of the mapping torus projection 
$\pi\colon M=M(\varphi)\longrightarrow S^1$. Pick one such fiber, and thicken it to an annulus $A$ contained in a 
fiber of $\pi$ whose image under $P$ is a disk in $S^2$ containing precisely two branch points of $P$, as shown in
Figure~\ref{f:branchcov2}. A fibered neighbourhood of our circle fiber in $M$ is the 
product of $A$ with an interval in $S^1$, and the image under $P_M$ of this fibered neighbourhood in $S^2\times S^1$ is a 
three-ball $D^3$. Now we can perform the connected sum of two copies of $S^2\times S^1$ along this $D^3$, and 
simultaneously fiber sum two copies of $M$ by removing the fibered neighbourhood and gluing the boundary tori
in a fiber-preserving way that matches up the branch loci. This completes the proof for $n=2$, and the 
general case follows by iterating the construction.
\end{proof}

\section{Geometric and algebraic reformulations}\label{s:algargum}

We now reformulate Theorems~\ref{t:topargument} and~\ref{t:topbundle} and their proofs to obtain equivalent formulations
in terms of Thurston geometries and in terms of purely algebraic properties of fundamental groups.

The following is the geometric reformulation of Theorem~\ref{t:topargument}.
   \begin{thm}\label{t:geoargument}
      A closed, oriented, connected three-manifold $N$ is dominated by a product $\Sigma\times S^1$ if and only if 
      \begin{enumerate}
         \item either $N$ possesses one of the geometries $\R^3$ or $\mathbb{H}^2 \times \R$, or
         \item $N$ is a connected sum of manifolds possessing the geometries $S^2 \times \R$ or $S^3$.
      \end{enumerate}
   \end{thm}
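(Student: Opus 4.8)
The plan is to deduce Theorem~\ref{t:geoargument} from Theorem~\ref{t:topargument} by translating each of the two topological conditions into geometric language, using the classification of Seifert fibered three-manifolds and of manifolds with finite fundamental group in terms of Thurston geometries. The main input is therefore not new work but a careful dictionary between the finite-covering conditions of Theorem~\ref{t:topargument} and the eight geometries; the subtlety lies in checking that "finitely covered by" in the topological statement matches "possesses the geometry" in the geometric statement, which requires knowing that the relevant geometries are preserved and detected under passing to finite covers.

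First I would handle condition~(1) of Theorem~\ref{t:topargument}. If $N$ is finitely covered by $F\times S^1$ with $F$ an aspherical surface, then $F$ is either $T^2$ or a higher-genus surface; in the first case the cover is $T^2\times S^1$, which carries the $\R^3$ geometry, and in the second case it carries $\mathbb{H}^2\times\R$. Since having a geometry is a property of the fundamental group for aspherical three-manifolds (any closed three-manifold finitely covered by a geometric aspherical manifold is itself geometric with the same geometry, by Thurston's geometrization together with the fact that these two geometries are closed under commensurability), $N$ itself carries $\R^3$ or $\mathbb{H}^2\times\R$. Conversely, a closed oriented three-manifold with geometry $\R^3$ or $\mathbb{H}^2\times\R$ is a Seifert fibered space over a flat or hyperbolic two-orbifold with zero Euler number, and such a manifold is finitely covered by a circle bundle of Euler number zero over a closed aspherical surface, i.e.~by $F\times S^1$ (after possibly passing to a further cover to trivialize the bundle). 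So condition~(1) of the two theorems match.

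Next I would treat condition~(2). If $N$ is finitely covered by $\#_n(S^2\times S^1)$, I need to show $N$ is a connected sum of manifolds with geometry $S^2\times\R$ or $S^3$, and conversely. The converse is immediate: $S^2\times S^1$ and its orientable finite quotients (for instance $S^2\tilde\times S^1$, the non-orientable bundle's orientation cover, or $\RP^3\#\RP^3$) carry $S^2\times\R$, spherical space forms $S^3/Q$ carry $S^3$, and a connected sum of such pieces is exactly a rationally inessential three-manifold, which by the proof of Theorem~\ref{t:ess} is finitely covered by some $\#_n(S^2\times S^1)$. For the forward direction, if $N$ is finitely covered by $\#_n(S^2\times S^1)$ then by Theorem~\ref{t:ess} again $N$ has no aspherical summand, so every prime summand of $N$ is either $S^2\times S^1$ (geometry $S^2\times\R$) or has finite fundamental group; and every prime three-manifold with finite fundamental group is a spherical space form by Perelman's proof of the elliptization conjecture~\cite{P1,P2,MT}, hence carries geometry $S^3$. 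Thus $N$ is a connected sum of $S^2\times\R$- and $S^3$-manifolds, matching condition~(2).

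The one genuine point to be careful about is that the two conditions in Theorem~\ref{t:topargument} are mutually exclusive and together with the classification exhaust exactly the manifolds appearing in Theorem~\ref{t:geoargument}; this follows because a manifold satisfying~(1) is aspherical hence rationally essential, while one satisfying~(2) is rationally inessential, and the same dichotomy (essential versus inessential) separates the two geometric alternatives. I expect the main obstacle to be purely expository: assembling the Seifert-geometry correspondence for the $\R^3$ and $\mathbb{H}^2\times\R$ cases, and in particular verifying that a Seifert manifold with one of these geometries and vanishing Euler number is genuinely finitely covered by a product $F\times S^1$ rather than merely by a circle bundle; this is standard (see the structure theory of Seifert fibrations over good two-orbifolds) but must be stated cleanly. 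Once that is in place, Theorem~\ref{t:geoargument} is a direct restatement of Theorem~\ref{t:topargument}.
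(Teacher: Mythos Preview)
Your proposal is correct and follows essentially the same route as the paper: deduce the geometric statement from Theorem~\ref{t:topargument} by translating each case via the Thurston-geometry dictionary, using Perelman for the spherical case. The only minor difference is that where you invoke geometrization and commensurability to pass the geometry from the cover $F\times S^1$ down to $N$, the paper instead uses Scott's result~\cite{Scott} that a manifold finitely covered by a Seifert manifold is itself Seifert, and then reads off the geometry from the Seifert data; both arguments are standard and equivalent in effect.
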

\begin{proof}
Let $N$ be a closed oriented three-manifold dominated by a product $\Sigma\times S^1$. If the prime decomposition of $N$ contains an
aspherical summand, then we have seen in the proof of Theorem~\ref{t:topargument} that $N$ itself is aspherical, and is finitely covered 
by a product $F\times S^1$, with $F$ of positive genus. In addition, $N$ is Seifert fibered since its finite covering $F\times S^1$ is, cf.~\cite{Scott}. Moreover,
$N$ carries the same Thurston geometry as this covering, namely the $\R^3$ geometry if $F$ has genus one, or the $\mathbb{H}^2 \times \R$
geometry if the genus of $F$ is at least $2$. Conversely, every manifold with one of these geometries is indeed finitely covered, and, therefore,
dominated by a product $F\times S^1$, cf.~\cite{S}.

If the prime decomposition of $N$ does not contain an aspherical summand, then each prime summand is either $S^1\times S^2$, with geometry
$S^2\times\R$, or has finite fundamental group, and thus carries the $S^3$ geometry by the work of Perelman~\cite{P1,P2,MT}. 
For all connected sums with only these summands we have proved in the proof of Theorem~\ref{t:topargument} that they are dominated by products.
\end{proof}

Finally, we give an algebraic formulation, in terms of properties of the fundamental group of the target.
   \begin{thm}\label{t:algargument}
      A closed, oriented, connected three-manifold $N$ is dominated by a product $\Sigma\times S^1$ if and only if 
      \begin{enumerate}
         \item either $\pi_1(N)$ is virtually $\pi_1(F)\times\Z$, for some aspherical surface $F$, or
         \item $\pi_1(N)$ is virtually free.
      \end{enumerate}
   \end{thm}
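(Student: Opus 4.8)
The plan is to deduce this algebraic statement directly from Theorem~\ref{t:topargument}, translating each of the two topological conditions into the corresponding group-theoretic one. So I would begin by invoking Theorem~\ref{t:topargument}: $N$ is dominated by a product $\Sigma\times S^1$ if and only if $N$ is finitely covered either by $F\times S^1$ with $F$ an aspherical surface, or by $\#_n(S^2\times S^1)$. A finite covering of $N$ corresponds to a finite-index subgroup of $\pi_1(N)$, so it remains to identify, in each of the two cases, the fundamental group of the relevant covering space and to check that the converse implications also hold at the level of groups.

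For the forward direction, in case (1) the finite covering $F\times S^1$ has fundamental group $\pi_1(F)\times\Z$ (using $\pi_1(S^1)=\Z$), which is exactly a finite-index subgroup of $\pi_1(N)$, so $\pi_1(N)$ is virtually $\pi_1(F)\times\Z$. In case (2), $\pi_1(\#_n(S^2\times S^1))$ is the free group $F_n$ (by van Kampen, since $\pi_1(S^2\times S^1)=\Z$ and $\pi_1$ of a connected sum in dimension three is the free product of the factors' fundamental groups), so $\pi_1(N)$ is virtually free. For the converse, I must argue that each algebraic condition forces $N$ back into one of the two topological cases. If $\pi_1(N)$ is virtually $\pi_1(F)\times\Z$, pass to the corresponding finite cover $\tilde N$; then $\pi_1(\tilde N)\cong\pi_1(F)\times\Z$ is a torsion-free three-manifold group containing a central $\Z$, hence (by the analysis already used in the proof of Proposition~\ref{p:ess}: freely indecomposable, irreducible, aspherical, Seifert fibered) $\tilde N$ is aspherical with the homotopy type of the corresponding surface-times-circle, and by the uniqueness of aspherical three-manifolds up to homotopy within their fundamental-group class together with the structure theory of Seifert fibrations, $\tilde N$ is a product $F\times S^1$ up to a further finite cover — so $N$ is finitely covered by a product. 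If $\pi_1(N)$ is virtually free, then it has a finite-index free subgroup, which by Stallings' theorem (a torsion-free group with a free finite-index subgroup is free) forces the whole structure: by Kneser's prime decomposition and Grushko's theorem, exactly as in the proof of Theorem~\ref{t:ess}, $N$ is finitely covered by some $\#_n(S^2\times S^1)$. In both cases Theorem~\ref{t:topargument} then yields domination by a product.

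The main obstacle I expect is not the free case, which is handled cleanly by the already-given argument in Theorem~\ref{t:ess}, but rather the step in case (1) that promotes ``$\pi_1(\tilde N)\cong\pi_1(F)\times\Z$'' to ``$\tilde N$ is finitely covered by $F\times S^1$''. One must rule out the possibility that $\tilde N$ is an honest nontrivial circle bundle over $F$ (or a more general Seifert manifold) whose fundamental group happens to be abstractly isomorphic to $\pi_1(F)\times\Z$ — but this is exactly the Euler-number-zero conclusion, which follows because a nontrivial bundle's fundamental group is a non-split central extension, whereas $\pi_1(F)\times\Z$ splits; alternatively one can simply observe that Lemma~\ref{lem} already shows nontrivial circle bundles are not dominated by products, so the hypothesis $\Sigma\times S^1\geq N$ plus Theorem~\ref{t:topargument} leaves only the product case. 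In fact the whole theorem can be phrased as: the two algebraic conditions are equivalent to the two topological conditions of Theorem~\ref{t:topargument}, and this equivalence of pairs of conditions is the substance of the proof — domination then follows formally. I would write it this way to keep the argument short.
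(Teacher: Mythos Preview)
Your proposal is correct and follows essentially the same route as the paper: reduce to Theorem~\ref{t:topargument} and translate each topological alternative into its group-theoretic counterpart via covering-space theory, Kneser--Grushko in the free case, and irreducibility/asphericity in the surface-times-$\Z$ case. The only difference is that in the converse of case~(1) the paper stops once $\tilde N$ is seen to be aspherical and hence homotopy equivalent to $F\times S^1$---this already suffices for domination---whereas you work a bit harder to land back in the literal hypothesis of Theorem~\ref{t:topargument}; your extra discussion of split versus non-split extensions and the appeal to Stallings are correct but unnecessary.
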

\begin{proof}
If $N$ is a closed oriented three-manifold dominated by a product $\Sigma\times S^1$, and the prime decomposition of $N$ contains an
aspherical summand, then we have seen in the proof of Theorem~\ref{t:topargument} that  $\pi_1(N)$ is virtually $\pi_1(F)\times\Z$. 
Conversely, if $N$ has a finite covering $\bar N$ with fundamental group  $\pi_1(F)\times\Z$, then this covering is prime as its 
fundamental group is freely indecomposable. Since $F$ is not $S^2$, it follows that $\bar N$ is irreducible and aspherical~\cite{M}.
Thus $\bar N$ is homotopy equivalent to $F\times S^1$, proving that $N$ is dominated by a product.

If the prime decomposition of $N$ does not contain an aspherical summand, then we have seen that $\pi_1(N)$ is virtually free. 
Conversely, if $N$ has virtually free fundamental group, then it is finitely covered by a three-manifold with free fundamental group. 
Kneser's prime decomposition theorem and Grushko's theorem imply that this covering must be a connected sum of copies of $S^2\times S^1$, 
where the number of summands is the number of generators of its fundamental group.
\end{proof}

The analogous reformulations can also be carried out for Theorem~\ref{t:topbundle}. The geometric formulation is:
\begin{thm}\label{t:geobundle}
      A closed, oriented, connected three-manifold $N$ is dominated by a non-trivial circle bundle over a surface if and only if 
      \begin{enumerate}
         \item either $N$ possesses one of the geometries $\Nil^3$ or $\widetilde{\SL_2(\R)}$, or
         \item $N$ is a connected sum of manifolds possessing the geometries $S^2 \times \R$ or $S^3$.
      \end{enumerate}
   \end{thm}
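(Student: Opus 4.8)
The plan is to mirror the structure of the proof of Theorem~\ref{t:algargument}, translating the algebraic dichotomy of Theorem~\ref{t:topbundle} into geometric language. First I would invoke Theorem~\ref{t:topbundle} itself, which already tells us that $N$ is dominated by a non-trivial circle bundle over a surface precisely when $N$ is finitely covered either by a non-trivial circle bundle over an aspherical surface, or by a connected sum $\#_n(S^2\times S^1)$. So the task reduces to identifying, in each of these two cases, exactly which Thurston geometries occur, and conversely checking that each geometry in the statement does provide such a finite covering.

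For the aspherical case, I would argue as follows. A non-trivial oriented circle bundle $M\to \Sigma_g$ over an aspherical surface $\Sigma_g$ has a central $\Z$ in $\pi_1$ and nonzero Euler number, hence is a Seifert manifold with nonzero Euler class of the fibration and aspherical base; by Scott's classification of geometric Seifert manifolds~\cite{Scott,S}, such a manifold carries exactly the $\Nil^3$ geometry when $g=1$ and the $\widetilde{\SL_2(\R)}$ geometry when $g\geq 2$. Since $N$ is finitely covered by such an $M$, and as in the proof of Proposition~\ref{p:essbd} we may arrange (after passing to further finite covers) that $N$ is itself a non-trivial circle bundle over an aspherical surface, $N$ carries the same geometry as its covering, i.e.\ $\Nil^3$ or $\widetilde{\SL_2(\R)}$. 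Conversely, every closed oriented three-manifold with $\Nil^3$ or $\widetilde{\SL_2(\R)}$ geometry is finitely covered by a non-trivial circle bundle over an aspherical surface (genus~$1$ for $\Nil^3$, genus~$\geq 2$ for $\widetilde{\SL_2(\R)}$), cf.~\cite{S}, and hence is dominated by such a bundle.

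For the rationally inessential case, I would note that, by the proof of Theorem~\ref{t:ess}, $N$ is finitely covered by $\#_n(S^2\times S^1)$ exactly when no prime summand of $N$ is aspherical, i.e.\ when each prime summand is either $S^1\times S^2$ — carrying the $S^2\times\R$ geometry — or has finite fundamental group, hence (by Perelman~\cite{P1,P2,MT}) carries the $S^3$ geometry. Conversely, any connected sum of such pieces is rationally inessential, hence finitely covered by some $\#_n(S^2\times S^1)$ and so, by Proposition~\ref{p:branchcovbundles}, dominated by a non-trivial circle bundle over a surface. Combining the two cases gives the stated equivalence.

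The only genuinely delicate point is the bookkeeping in the aspherical case: one must be sure that passing to finite covers to make $N$ an honest circle bundle does not change the Thurston geometry, and that the non-triviality of the Euler class is preserved — but both of these are already handled in the proof of Proposition~\ref{p:essbd} (where it is shown that the fiber image has finite order in homology, forcing nonzero Euler class) and follow from the fact that Thurston geometry is a commensurability invariant for aspherical Seifert manifolds. So I expect no real obstacle; the proof is essentially a dictionary lookup against the classification of geometric Seifert fibered spaces, exactly parallel to the proof of Theorem~\ref{t:geoargument}.
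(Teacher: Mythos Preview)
Your proposal is correct and follows essentially the same approach as the paper: reduce to Theorem~\ref{t:topbundle} and then translate each case into geometric language via Scott's classification, noting that a manifold finitely covered by a Seifert manifold is itself Seifert and carries the same Thurston geometry. The paper is terser---it observes that case~(2) is identical to that of Theorem~\ref{t:geoargument} and only treats case~(1)---and your detour through Proposition~\ref{p:essbd} in the aspherical case is unnecessary (the paper simply cites~\cite{Scott} for the fact that $N$ inherits the geometry of its Seifert cover), but the logic is the same.
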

\begin{proof}
We only have to prove the equivalence between the first cases of this theorem and of Theorem~\ref{t:topbundle}. In one direction,
if $N$ has one of the geometries $\Nil^3$ or $\widetilde{\SL_2(\R)}$, then it is finitely covered by a non-trivial circle bundle over an 
aspherical surface~\cite{S}. Conversely, if $N$ is  finitely covered by a non-trivial circle bundle over an aspherical surface, then it is a 
Seifert manifold carrying the same Thurston geometry as this finite covering~\cite{Scott}. 
\end{proof}

The algebraic version of Theorem~\ref{t:topbundle} reads as follows.
\begin{thm}\label{t:algbundle}
      A closed, oriented, connected three-manifold $N$ is dominated by a non-trivial circle bundle over a surface if and only if 
      \begin{enumerate}
         \item either $\pi_1(N)$ has a finite index subgroup $\Gamma$ which fits into a central extension
         $$
         1\longrightarrow\Z\longrightarrow\Gamma\longrightarrow\pi_1(F)\longrightarrow 1 
         $$
         with non-zero Euler class for some aspherical surface $F$, or
         \item $\pi_1(N)$ is virtually free.
      \end{enumerate}
   \end{thm}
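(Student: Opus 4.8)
The plan is to mirror the proof of Theorem~\ref{t:algargument}, using Theorem~\ref{t:topbundle} as the topological input and only reformulating the two cases in group-theoretic language. In one direction, suppose $N$ is dominated by a non-trivial circle bundle over a surface. By Theorem~\ref{t:topbundle}, either $N$ is finitely covered by a non-trivial circle bundle $M\to F$ over an aspherical surface $F$, or $N$ is finitely covered by some $\#_n(S^2\times S^1)$. In the first case, $\Gamma:=\pi_1(M)$ is a finite index subgroup of $\pi_1(N)$, and since the bundle is non-trivial over an aspherical (hence $K(\pi,1)$) surface, the Gysin/long exact homotopy sequence gives exactly the central extension displayed in the statement, with Euler class equal to the (non-zero) Euler class of the bundle $M\to F$; centrality is automatic because the structure group of a circle bundle acts trivially on the fiber $\pi_1$. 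In the second case, $\pi_1(\#_n(S^2\times S^1))=F_n$ is free, so $\pi_1(N)$ is virtually free.

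For the converse, suppose first that $\pi_1(N)$ has a finite index subgroup $\Gamma$ fitting into a central extension $1\to\Z\to\Gamma\to\pi_1(F)\to 1$ with non-zero Euler class, $F$ an aspherical surface. The point is to realize this algebra by a finite covering $\bar N\to N$ that is itself a non-trivial circle bundle. Passing to the covering $\bar N$ corresponding to $\Gamma$, I would argue that $\bar N$ is prime: its fundamental group $\Gamma$ has non-trivial center (the image of $\Z$), hence is freely indecomposable, so by the Kneser–Milnor decomposition $\bar N$ has a single prime summand, possibly connect-summed with a homotopy sphere. Since $\pi_1(\bar N)=\Gamma$ is infinite and centerful but not $\Z$ (as $\pi_1(F)$ is infinite), $\bar N$ is aspherical, hence irreducible. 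An aspherical closed three-manifold with infinite cyclic center in its fundamental group is Seifert fibered, by the Seifert fiber space conjecture exactly as invoked in the proof of Proposition~\ref{p:ess}. After passing to a further finite cover I may assume the Seifert fibration is an honest circle bundle $\bar N\to \bar F$; the corresponding central extension is the given one (up to finite index), and its Euler class is non-zero precisely because the original Euler class was. Thus $\bar N$, and hence $N$, is dominated by a non-trivial circle bundle over an aspherical surface.

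If instead $\pi_1(N)$ is virtually free, then $N$ has a finite covering $\bar N$ with free fundamental group; by Kneser's prime decomposition theorem together with Grushko's theorem, $\bar N\cong\#_n(S^2\times S^1)$ where $n$ is the rank of the free group. Then $N$ is dominated by a non-trivial circle bundle over a surface by Proposition~\ref{p:branchcovbundles} (composed with the covering projection $\bar N\to N$). This closes the loop.

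The step I expect to require the most care is the converse in case~(1): extracting from the purely algebraic data of a centerful finite-index subgroup a \emph{geometric} finite covering of $N$ that is actually a circle bundle with non-vanishing Euler class. Two subtleties arise. First, one must check that the Euler class condition is preserved under passing to finite covers of the base surface — but an $m$-fold cover $F'\to F$ multiplies the Euler number by $m\neq 0$, so non-vanishing persists. Second, one must ensure that the Seifert fibration on the aspherical cover $\bar N$ can be upgraded to a genuine circle bundle after a further finite cover, with base again an aspherical surface and with the extension matching the prescribed one; this is standard for Seifert manifolds whose fundamental group has infinite cyclic center — one kills the multiple fibers by passing to a suitable cover — but it is the place where the argument is least a formal translation of Theorem~\ref{t:topbundle} and most dependent on the structure theory of Seifert manifolds, exactly as in the proofs of Propositions~\ref{p:ess} and~\ref{p:essbd}.
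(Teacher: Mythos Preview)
Your proof is correct and follows the same overall scheme as the paper, reducing everything to Theorem~\ref{t:topbundle} and translating the two cases into group-theoretic language. The only real divergence is in the converse of case~(1). You invoke the Seifert fiber space conjecture to put a Seifert structure on the cover $\bar N$, then pass to a further cover to get an honest circle bundle, and finally check that the Euler class survives. The paper takes a shorter path: once $\bar N$ is seen to be prime, irreducible and aspherical, it is a $K(\Gamma,1)$, and so is the circle bundle over $F$ with the prescribed non-zero Euler class; hence $\bar N$ is homotopy equivalent to that bundle, and $N$ is dominated by it. This sidesteps the Seifert fiber space conjecture entirely and makes the Euler-class-preservation issue you flag in your final paragraph disappear, since the circle bundle is already given by the algebraic data. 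Your argument is not wrong, just longer, and the subtleties you identify are genuine for your route but avoidable.
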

\begin{proof}
Again we only have to prove the equivalence between the first cases of this theorem and of Theorem~\ref{t:topbundle}. 
In one direction, if $N$ is finitely covered by a non-trivial circle bundle over an aspherical surface, then its 
fundamental group has a finite index subgroup admitting the required central extension. Conversely, if $\pi_1(N)$
has a finite index subgroup $\Gamma$ fitting into such a central extension, then the corresponding finite covering
has to be prime, irreducible and aspherical, and is therefore homotopy equivalent to the total space of the corresponding 
circle bundle over $F$.
\end{proof}

\section{Three-manifold groups presentable by products}\label{s:groups}

As an algebraic counterpart of our topological results about domination by products for three-manifolds we now want to determine
which fundamental groups of three-manifolds are presentable by products. First we recall the definition:
 \begin{defn}\label{repgroupsdef}(\cite{KL})
    An infinite group $\Gamma$ is \emph{presentable by a product} if there is a homomorphism 
    $\varphi \colon \Gamma_1 \times\Gamma_2\longrightarrow\Gamma$ onto a subgroup of finite index, 
    such that both factors $\Gamma_i$ have infinite image $\varphi(\Gamma_i)\subset\Gamma$.
  \end{defn}
Without loss of generality one can replace each $\Gamma_i$ by its image
in $\Gamma$ under the restriction of~$\varphi$, so that one can assume
the factors $\Gamma_i$ to be subgroups of $\Gamma$ and $\varphi$ to be
multiplication in~$\Gamma$. It is obvious that a group with infinite center $C(\Gamma)$ is presentable by a product
-- just take $\Gamma_1=C(\Gamma)$ and $\Gamma_2=\Gamma$.

The property of (not) being presentable by a product is preserved under passage to finite index subgroups. This 
property was introduced in~\cite{KL} and further studied in~\cite{KL2} because, according to~\cite{KL}, it is a property 
that the fundamental groups of rationally essential manifolds dominated by products must have. 

\begin{thm}
For a closed three-manifold $M$ with infinite fundamental group the following three properties are equivalent:
\begin{enumerate}
\item $\pi_1(M)$ is presentable by a product,
\item $\pi_1(M)$ has a finite index subgroup with infinite center,
\item $M$ is a Seifert manifold.
\end{enumerate}
\end{thm}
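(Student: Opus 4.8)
The plan is to prove the cyclic chain of implications $(3)\Rightarrow(2)\Rightarrow(1)\Rightarrow(3)$, using the geometrisation of three-manifolds together with the algebraic structure of Seifert fibrations and the main theorems already established in the paper.

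\emph{Step one: $(3)\Rightarrow(2)$.} Suppose $M$ is a Seifert manifold with infinite fundamental group. The base orbifold is then either hyperbolic, Euclidean, or bad/spherical; in the latter case $\pi_1(M)$ would be finite or the manifold would be $S^2\times S^1$-like, and in fact a Seifert manifold with infinite fundamental group has a base orbifold with infinite orbifold fundamental group after passing to a finite cover, so we may assume the base is a genuine aspherical surface $F$. After passing to a finite cover we may also assume the bundle is an honest (oriented) circle bundle $S^1\to M\to F$. Then the fiber generates an infinite cyclic central subgroup of $\pi_1(M)$ (here one uses that for a circle bundle over an aspherical surface the central extension $1\to\Z\to\pi_1(M)\to\pi_1(F)\to 1$ makes the fiber class central, cf.~\cite[p.~92/93]{J} and~\cite{S}), so $(2)$ holds. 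I should be careful about the one family of Seifert manifolds over $S^2$ with three exceptional fibers whose orbifold is spherical but whose fundamental group is infinite --- these carry $\Nil^3$ or $\widetilde{\SL_2(\R)}$ geometry and still have a finite cover which is a nontrivial circle bundle over a higher-genus surface, so the argument goes through; this is the one place that needs a line of care.

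\emph{Step two: $(2)\Rightarrow(1)$.} This is immediate from the remark made just before the theorem: a group with infinite center is presentable by a product (take $\Gamma_1$ to be the center and $\Gamma_2$ the whole group), and being presentable by a product is inherited by passing to and from finite index subgroups. So if a finite index subgroup $\Gamma\leq\pi_1(M)$ has infinite center, then $\Gamma$ is presentable by a product, hence so is $\pi_1(M)$.

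\emph{Step three: $(1)\Rightarrow(3)$ --- the main obstacle.} Here is where the real work lies. Assume $\pi_1(M)$ is presentable by a product. First reduce to the prime case: if the prime decomposition contains no aspherical summand, then by Theorem~\ref{t:ess} $M$ is finitely covered by some $\#_n(S^2\times S^1)$, which is a Seifert manifold (it is $S^3$ for $n\le 1$, and for $n\ge 2$ it is not Seifert --- so actually I need to check: a connected sum of at least two copies of $S^2\times S^1$ has virtually free nonabelian fundamental group, which is \emph{not} presentable by a product since a free product of two nontrivial groups, one infinite, cannot be written as an almost-product with both factors infinite; indeed nonabelian free groups are not presentable by products). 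Thus case $(1)$ with infinite $\pi_1$ forces the prime decomposition to contain an aspherical summand; as in the proof of Proposition~\ref{p:ess} the presentability by a product then pushes a nontrivial element into the center of $\pi_1(N_i)$ for an aspherical prime summand $N_i$, forcing $\pi_1(M)$ to be freely indecomposable and hence $M$ itself prime, irreducible and aspherical. Now I invoke the deeper input: an aspherical, irreducible three-manifold whose fundamental group is presentable by a product must be Seifert fibered. One route is to cite the work of Kotschick--L\"oh~\cite{KL,KL2} showing that such groups have nontrivial center after passing to a finite index subgroup (a presentable-by-product three-manifold group that is not virtually a nontrivial direct product of infinite groups across a nonaspherical-factor situation must have virtually infinite center), and then apply the Seifert fiber space conjecture (Casson--Jungreis~\cite{CJ}, Gabai~\cite{Gabai}) exactly as in Proposition~\ref{p:ess}. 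The delicate point to pin down is ruling out the remaining possibility that $\pi_1(M)$ is virtually a product $\pi_1(\Sigma)\times\pi_1(\Sigma')$ of two surface groups or similar without having infinite center --- but this cannot happen for a three-manifold group by a dimension/cohomological-dimension count (such a product has cohomological dimension $\ge 4$ unless one factor is virtually cyclic, in which case the center is virtually infinite). I expect this last dichotomy --- showing that presentability by a product for a three-manifold group forces \emph{virtually infinite center} rather than a genuine higher-rank product decomposition --- to be the crux, and it is handled by combining the $3$-dimensional cohomological dimension bound with the classification of the possible factors.
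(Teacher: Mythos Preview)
Your overall architecture matches the paper's --- prove $(3)\Rightarrow(2)\Rightarrow(1)$ easily, then work hard on $(1)\Rightarrow(2)$ (equivalently $(1)\Rightarrow(3)$) by reducing to the prime aspherical case, invoking the Kotschick--L\"oh dichotomy, and then the Seifert fiber space conjecture. But your reduction to the prime case in Step three has a genuine gap, and there is also a small oversight.

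The oversight: you conclude that ``case $(1)$ with infinite $\pi_1$ forces the prime decomposition to contain an aspherical summand.'' This is false. A manifold finitely covered by $S^2\times S^1$ has infinite (virtually cyclic) fundamental group, is presentable by a product, and has no aspherical summand. You must treat this case separately (such manifolds are Seifert, so $(3)$ holds directly); as written your case split leaks.

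The real gap: once you assume an aspherical summand $N_i$ exists, you write that ``as in the proof of Proposition~\ref{p:ess} the presentability by a product then pushes a nontrivial element into the center of $\pi_1(N_i)$, forcing $\pi_1(M)$ to be freely indecomposable.'' This conflates the topological and the algebraic situations. In Proposition~\ref{p:ess} one has an actual map $\Sigma\times S^1\to N$, and the central element comes from the $S^1$ factor, which is already central in $\pi_1(\Sigma)\times\Z$. Mere presentability by a product gives only a homomorphism $\varphi\colon\Gamma_1\times\Gamma_2\to\pi_1(M)$ with both images infinite; the images centralize each other but neither need be central, cyclic, or even abelian, so no element is ``pushed into the center'' and you cannot conclude free indecomposability this way. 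The paper closes this gap by citing~\cite[Cor.~9.2]{KL2}: the only nontrivial free product presentable by a product is $\Z_2\ast\Z_2$, which is virtually $\Z$. This single citation simultaneously handles the reduction to prime and the virtually-$\Z$ exceptional case you missed.

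On your endgame: once you are in the aspherical prime case and have invoked the dichotomy from~\cite[Prop.~3.2]{KL} (virtually infinite center, or virtually a direct product of infinite groups), your cohomological-dimension argument to rule out the second alternative is a legitimate alternative to the paper's use of Epstein's theorem~\cite{E}. Both work; Epstein gives directly that one factor is $\Z$, while your route needs the additional (true) step that a free direct factor of an aspherical $3$-manifold group must be infinite cyclic.
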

\begin{proof}
It is clear that (3) implies (2). The converse is the celebrated Seifert fiber space conjecture, the final cases of which were resolved by 
Casson-Jungreis~\cite{CJ} and by Gabai~\cite{Gabai}.

As noted above, it is also clear that (2) implies (1) for any group. We now prove the converse for three-manifold groups.

By~\cite[Cor.~9.2]{KL2} the only non-trivial free product that is presentable by a product is $\Z_2\star\Z_2$, which is virtually
$\Z$ and so satisfies (2). Thus, we may assume that $\pi_1(M)$ is freely indecomposable, and $M$ is prime. If $\pi_1(M)$ 
is not virtually $\Z$, then $M$ is irreducible and aspherical by the sphere theorem, cf.~\cite{M}. In particular, $\pi_1(M)$ is torsion-free.

By~\cite[Prop.~3.2]{KL}, a torsion-free group $\Gamma$ which is presentable by a product has one of the following
properties:
\begin{itemize}
\item either $\Gamma$ has a finite index subgroup with infinite center, or
\item some finite index subgroup splits as a direct product of infinite groups.
\end{itemize}
Applying this to our $\pi_1(M)$, we have to see that the second alternative in fact implies the first. It is a theorem of 
Epstein~\cite{E} that if the fundamental group of a closed three-manifold splits as a direct product of infinite groups, then one 
of the factors has to be infinite cyclic. But then this factor is central in the whole group.
\end{proof}

\section{Final remarks}\label{s:dis}

\subsection{}
The main result of~\cite{KL} was that for rationally essential manifolds, in any dimension, domination by a product 
implies that the fundamental group is presentable by a product. Theorem~\ref{t:topargument} shows that the converse 
is not true already in dimension three:
Seifert manifolds carrying one of the geometries $\Nil^3$ or $\widetilde{\SL_2(\R)}$ are aspherical and have fundamental
groups presentable by products, but are not dominated by products. (These are the only counterexamples to the converse 
in dimension three.)

The geometry $\widetilde{\SL_2(\R)}$ has another interesting feature relevant to our discussion: $\widetilde{\SL_2(\R)}$
is quasi-isometric to $\mathbb{H}^2 \times \R$, cf.~\cite[IV.48]{dlH}. Compact manifolds with the latter geometry are finitely covered by products,
whereas those with the former geometry are not even dominated by products, although the fundamental groups are presentable 
by products in both cases. It was noted in~\cite[Thm.~10.2]{KL2} that presentability by products is not a quasi-isometry invariant
property of finitely generated groups. This, together with the contrast between manifolds with the geometries $\widetilde{\SL_2(\R)}$
and $\mathbb{H}^2 \times \R$, shows that domination by products cannot be detected by coarse methods, neither at the 
level of groups nor at the level of universal coverings of aspherical manifolds.

\subsection{}
One of the standard characterizations of closed Seifert manifolds is through the property of being finitely covered by circle 
bundles.  In the rationally essential case this can be weakened by replacing finite coverings by arbitrary dominant maps:
\begin{cor}
For a rationally essential closed oriented connected three-manifold the following are equivalent:
\begin{enumerate}
\item it is Seifert fibered,
\item it is finitely covered by a circle bundle,
\item it is dominated by a circle bundle.
\end{enumerate}
\end{cor}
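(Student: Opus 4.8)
The plan is to prove the chain of implications $(2)\Rightarrow(1)\Rightarrow(3)\Rightarrow(2)$, or rather to observe that most of the work has already been done and only the genuinely new direction needs attention. The implication $(2)\Rightarrow(1)$ is trivial, since a finite covering is a (degree one onto each sheet, hence nonzero degree) dominant map; more to the point, $(2)\Rightarrow(3)$ is immediate. The implication $(1)\Rightarrow(2)$ is the standard fact that every closed Seifert fibered three-manifold is finitely covered by a circle bundle over a surface, obtained by passing to a finite cover that unwraps the exceptional fibers and the orbifold locus of the base; see \cite{Scott} or \cite{S}. So the only content is $(3)\Rightarrow(1)$: a rationally essential closed oriented three-manifold dominated by a circle bundle over a surface is Seifert fibered.

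For $(3)\Rightarrow(1)$ I would essentially re-run the argument already given in the proofs of Proposition~\ref{p:ess} and Proposition~\ref{p:essbd}, which never used the triviality or non-triviality of the circle bundle in an essential way up to the point where Seifert fiberedness is deduced. Concretely: let $f\colon M\longrightarrow N$ be a nonzero degree map with $M$ a circle bundle (trivial or not) over a closed surface $\Sigma_g$. Replacing $N$ by a finite cover, assume $f$ is $\pi_1$-surjective. Rational essentialness of $N$ forces $\pi_1(M)$ infinite, hence $g>0$, hence $M$ is aspherical with a central $\Z$ coming from the fiber (in the trivial case $M=\Sigma_g\times S^1$ this is the $S^1$ factor; in the non-trivial case it is the central extension displayed in the proof of Proposition~\ref{p:essbd}). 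By Theorem~\ref{t:ess}, $N$ has an aspherical prime summand $N_i$; composing $f$ with the collapse map $N\to N_i$ gives a dominant map of aspherical three-manifolds $M\to N_i$. Since $\pi_1(\Sigma_g)$ has trivial center while any quotient through which $\pi_1(M)\to\pi_1(N_i)$ might factor via $\pi_1(\Sigma_g)$ cannot be onto a finite index subgroup of $\pi_1(N_i)$ for degree reasons (the target would be covered by a surface), $\pi_1(f)$ must be non-trivial on the central $\Z$. Hence $\pi_1(N)$ has a non-trivial central element, so $N$ is freely indecomposable, hence prime, hence (being aspherical, as it cannot be $S^1\times S^2$ which has trivial center) irreducible and aspherical. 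Then the Seifert fiber space conjecture, proved by Casson--Jungreis~\cite{CJ} and Gabai~\cite{Gabai}, shows $N$ is Seifert fibered.

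The only mild subtlety — and the step I expect to need the most care — is the claim that the dominant map $M\to N_i$ cannot factor through $\Sigma_g$ up to homotopy. This is exactly the point handled in the proofs of Propositions~\ref{p:ess} and~\ref{p:essbd}: a map factoring through $\pi_1(\Sigma_g)$ would, since $N_i$ is aspherical, be homotopic to one factoring through the surface $\Sigma_g$, and a surface cannot dominate an aspherical three-manifold (the induced map on $H_3$ is zero because $H_3(\Sigma_g;\Q)=0$, while a nonzero degree map induces a surjection on top rational homology of the target up to the finite covering business already dealt with). So once $\pi_1(f)$ is forced to be non-trivial on the central $\Z$, the rest is formal. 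I would therefore write the corollary's proof as: "$(2)\Leftrightarrow(1)$ is standard \cite{Scott,S}; $(2)\Rightarrow(3)$ is trivial; and $(3)\Rightarrow(1)$ follows by the argument in the proofs of Propositions~\ref{p:ess} and~\ref{p:essbd}, which show that a rationally essential three-manifold dominated by a circle bundle has infinite virtual center and is therefore Seifert fibered by \cite{CJ,Gabai}."
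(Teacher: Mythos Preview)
Your argument is correct and essentially the same as the paper's, which also closes the cycle via Propositions~\ref{p:ess} and~\ref{p:essbd}; the only cosmetic difference is that the paper invokes the \emph{full} conclusions of those propositions to get $(3)\Rightarrow(2)$ directly (and then uses Scott~\cite{Scott} for $(2)\Rightarrow(1)$), whereas you re-run only the first half of their arguments to reach $(3)\Rightarrow(1)$. One slip to fix: $\pi_1(S^1\times S^2)=\Z$ has \emph{infinite} center, not trivial center; the correct reason $N\neq S^1\times S^2$ is simply that $N$ is rationally essential (equivalently, being prime with an aspherical summand $N_i$, it equals $N_i$).
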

\begin{proof}
It is clear that each of these conditions implies the next one. However, (3) implies (2) by Propositions~\ref{p:ess} and~\ref{p:essbd},
and (2) implies (1) by Scott's result in~\cite{Scott}.
\end{proof}

\subsection{}\label{ss:W}
Our discussion in Section~\ref{s:proof1} shows that there are no maps of non-zero degree between trivial and nontrivial circle bundles
over aspherical surfaces. This statement already appeared in the work of Wang twenty years ago, see~\cite[Theorem~2]{Wa1}. 
However, the proof given there is hard to follow. In particular, there is no argument there for the case covered by our Lemma~\ref{lem}.
At the corresponding place in the proof, compare~\cite[p.~153]{Wa1}, in particular equation (III), Wang seems to argue that a 
group that is presentable by a product must itself be a product, which is of course false. The fundamental groups of Seifert
manifolds with non-zero Euler number are presentable by products, but are not virtually products.


\bibliographystyle{amsplain}

\end{document}